\newtheorem{theorem}{Theorem}
\newtheorem{lemma}{Lemma}
\newtheorem{proposition}{Proposition}
\newtheorem{remark}{Remark}
\newtheorem{corollary}{Corollary}
\newcounter{constnum}
\newcommand{\const}[1]{\refstepcounter{constnum}\label{#1}}
\def\bibtex{0}
\title{\LARGE \bf
Non-Asymptotic Analysis of Classical Spectrum Estimators with $L$-mixing Time-series Data
}
\begin{document}

\author{Yuping Zheng and
Andrew Lamperski
  \thanks{This work was supported in part by NSF ECCS 2412435}
  \thanks{Y. Zheng and A. Lamperski are with the department of Electrical and Computer Engineering, University  of Minnesota, Minneapolis, MN 55455, USA {\tt\small zhen0348@umn.edu, alampers@umn.edu}}
}

\maketitle
\thispagestyle{empty}
\pagestyle{empty}

\begin{abstract}
  Spectral estimation is a fundamental problem for time series analysis, which is widely applied in economics, speech analysis, seismology, and control systems. The asymptotic convergence theory for classical, non-parametric estimators, is well-understood, but the non-asymptotic theory is still rather limited. Our recent work gave the first non-asymptotic error bounds on the well-known Bartlett and Welch methods, but under restrictive assumptions. In this paper, we derive non-asymptotic error bounds for a class of non-parametric spectral estimators, which includes the classical Bartlett and Welch methods, under the assumption that the data is an $L$-mixing stochastic process.
 A broad range of processes arising in time-series analysis, such as autoregressive processes and measurements of geometrically ergodic Markov chains, can be shown to be $L$-mixing. In particular, $L$-mixing processes can model a variety of nonlinear phenomena which do not satisfy the assumptions of our prior work. Our new error bounds for $L$-mixing processes match the error bounds in the restrictive settings from prior work up to logarithmic factors. 
\end{abstract}

\section{INTRODUCTION}
%

Spectral estimation is the problem of estimating the power spectral density of a time series from a finite record of data samples. It is widely applied in economics, speech analysis, seismology, control systems, and more. The most common spectral estimation approaches are non-parametric (classical) methods and parametric (modern) methods \cite{stoica2005spectral}. Common non-parametric methods include periodograms, the Welch method, the Bartlett method, and the Blackman-Tukey method. Common parametric forms include ARMA and state space models.
Non-parametric estimators are widely used in practice, especially when information required to select a parametric model, such as autoregressive order, is unknown.
In this work, we focus on the analysis of two classical spectrum estimators: Bartlett and Welch estimators. 

The asymptotic analysis of spectral estimators, in which the length of the time series goes to infinity, is well-understood \cite{stoica2005spectral}, \cite{brillinger2001time}, \cite{brockwell1991time}, \cite{liu2010asymptotics}.
In practice, only a finite amount of data is available, and so non-asymptotic error bounds are desirable. For parametric estimation, non-asymptotic results have been long available for autoregressive models \cite{goldenshluger2001nonasymptotic}, while the non-asymptotic analysis of linear state-space models has become reasonably mature \cite{hardt2018gradient,oymak2018non,tsiamis2019finite,sarkar2019finite,lee2020non}.

The non-asymptotic theory of non-parametric spectral estimation is substantially less developed than either the asymptotic theory of non-parametric methods, or the non-asymptotic theory of parametric time series estimators. The most closely related work is \cite{lamperski2023nonasymptotic}, which gives non-asymptotic error bounds of the Bartlett, Welch, and Blackman-Tukey estimators, albeit under restrictive assumptions. Non-asymptotic bounds for estimators not covered in this paper include work on Blackman-Tukey methods \cite{zhang2021convergence,veedu2021topology}, smoothed periodograms \cite{fiecas2019spectral}, and Wiener filters \cite{doddi2022efficient}.



Our contribution is to obtain non-asymptotic error bounds for Bartlett and Welch estimators under the condition that the data series is $L$-mixing. The class of $L$-mixing processes was introduced in \cite{gerencser1989class} to quantify the decay of dependencies of stochastic processes over time. 
Many common models in time series analysis can be proved to be $L$-mixing, like autoregressive
processes \cite{gerencser1989class} and measurements of uniformly geometrically ergodic Markov chains \cite{gerencser2002new}.
In recent years, the theory of $L$-mixing processes has been used to give non-asymptotic error bounds on stochastic optimization methods with temporally dependent data streams
 \cite{chau2019stochastic}, \cite{zheng2022constrained}.
In Subsection~\ref{ss:comparison}, we give a detailed comparison between the $L$-mixing assumption and the assumptions from \cite{lamperski2023nonasymptotic}. 
In particular, we explain how $L$-mixing processes cover a variety of nonlinear phenomena (both in the dynamics and the measurements), which do not satisfy the assumptions in \cite{lamperski2023nonasymptotic}, 
while in many practical cases, data sequences satisfying the assumptions in \cite{lamperski2023nonasymptotic} are also $L$-mixing. 

The paper is organized as follows. In Section \ref{sec:Setup}, we set up the problem and present the algorithm. Section \ref{sec:analysis} presents the main results on spectral estimation error analysis. Section \ref{sec:L-mixing_detail} gives the background on $L$-mixing processes needed in the main proofs. Section \ref{sec:proofs} gives proofs of the main results. Section \ref{sec:sim} verifies our theory with a simulation of a finite-state Markov chain. Conclusions are given in Section~\ref{sec:conclusion}.
 
\section{Problem Setup} \label{sec:Setup}

\subsection{Notation}
The sets of real numbers, complex numbers and nonnegative integers are denoted by $\bbR$, $\bbC$ and $\bbN$ respectively.
Random variables are denoted in bold. If $\bx$ is a random variable, then 
$\bbE[\bx]$ is its expected value. $\|x\|_2$ denotes the Euclidean norm of the vector $x$. 
For matrix $A $, $\|A\|_2$ denotes the spectral norm and $\|A\|_F$ denotes the Frobenius norm. $\by^*$ denotes the conjugate transpose of $\by$. Let $j$ denote the imaginary unit. For real numbers $a$ and $b$, denote $a\lor b = \max\{a,b\}$.

Let $\cY$ be a finite-dimensional vector space with inner product $\langle \cdot,\cdot\rangle$ and corresponding norm $\|\cdot \|$. For a random variable, $\by\in\cY$, and $q\ge 1$ let $\|\by\|_{L_q}=\left(\bbE[\|\by\|^q]\right)^{1/q}$, which is the corresponding $L_q$ norm. 

\subsection{$L$-mixing processes} \label{subsec:L-mixing_intro}

We will assume that the data, $\by[k]$, is an $L$-mixing process. 
 Here, we introduce some background for $L$-mixing processes. We start with the classical definitions in continuous time and describe how they change for discrete time. 


 Let $\cF=(\cF_t)_{t\ge 0}$ be an increasing family of $\sigma$-algebras.
 Let $\cF^+=(\cF_t^+)_{t\ge 0}$ be a decreasing family of $\sigma$-algebras such that for all $t\ge 0$, $\cF_t$ and $\cF_t^+$ are independent,
$\cF_t^+=\cF_0^+$ for all $t\le 0$,
and $\cF_t^+=\sigma\left\{\bigcup_{\epsilon >0} \cF_{t+\epsilon} \right\}$. 
A continuous-time stochastic process $\by_t\in\cY$ is called \emph{$L$-mixing} with respect to $(\cF,\cF^+)$ if 
\begin{itemize}
\item $\by_t$ is measurable with respect to $\cF_t$ for all $t\ge 0$
\item $M_q(\by):=\sup_{t\ge 0}\|\by_t\|_{L_q}<\infty$ for all $q\ge 1$
\item $\Gamma_q(\by):=\int_0^{\infty}\gamma_q(\tau,\by)d\tau<\infty$ for all $q\ge 1$, where $\gamma_q(\tau,\by)=\sup_{t\ge \tau}\|\by_t-\bbE[\by_t|\cF_{t-\tau}^+]\|_{L_q}$. 
\end{itemize}
The number, $\Gamma_q(\by)$ characterizes the speed at which dependencies decay over time. 

Now we sketch the discrete-time case. 
Let $\cF=(\cF_k)_{k\ge 0}$ be an increasing sequence of $\sigma$-algebras and let $\cF^+=(\cF_k^+)_{k\ge 0}$ be a decreasing sequence of $\sigma$-algebras such that $\cF_k$ and $\cF_k^+$ are independent for all $k\ge 0$. 
We say that the discrete-time process $\by_k$ is $L$-mixing with respect to  $(\cF,\cF^+)$, if the continuous-time process defined by $\by_t=\by_{\floor{t}}$ is $L$-mixing with respect to the continuous-time $\sigma$-algebras defined by $\cF_t=\cF_{\floor{t}}$ and $\cF_t^+=\cF_{\floor{t}}^+$.

The moment bounds remains the same as in continuous time, but the discrete-time counterpart of $\Gamma_q(\by)$ is defined by
$
\Gamma_{d,q}(\by)=\sum_{k=0}^{\infty}\gamma_q(k,\by).
$	
Note that 
$\gamma_{q}(\tau,\by)=\max\{\gamma_q(\floor{\tau},\by),\gamma_q(\floor{\tau}+1,\by)\}$, which implies that
$
\Gamma_{d,q}(\by)\le \Gamma_q(\by)\le 2\Gamma_{d,q}(\by).
$
In particular, $\Gamma_{d,q}(\by)$ is finite  if and only if $\Gamma_q(\by)$ is finite.


%
%

\subsection{Problem and Algorithm}

Given a stationary zero-mean discrete-time stochastic process $\by[k] \in \bbR^n$, the autocovariance sequence and power sepctral density are 
$R[k] = \bbE[\by[i+k]\by[i]^\top]$ and $\Phi(s) = \sum_{-\infty }^{\infty} e^{-j 2 \pi s k} R[k]$,
where $s \in [-1/2, 1/2]$.

The batch-form 
Bartlett and Welch estimators are
\begin{align*}
  \hat\by_i(s)&=\sum_{k=0}^{M-1}w_k(s) \by[iK+k]\\
  \hat \bPhi_L(s)&=\frac{1}{L}\sum_{i=0}^{L-1}\hat \by_i(s)\hat \by_i(s)^\star
\end{align*}
where the Bartlett method has $K=M$ and $w_k(s)=\frac{1}{\sqrt{M}}e^{-j2\pi ks}$, while the Welch method may have $K\ne M$ and uses $w_k(s)=\frac{v_k}{\|v\|_2}e^{-j2\pi ks}$ for some window vector $0\ne v\in\bbR^M$.
In both the Bartlett and Welch estimators, $w(s)$ is a Euclidean unit vector for all $s$: $\sum_{k=0}^{M-1}|w_k(s)|^2=1$.

Choose $\alpha_k = \frac{1}{k+1}$, then the general iterative algorithm is:
$$
\hat \bPhi_{k+1}(s) = \hat \bPhi_k(s) + \alpha_k (\hat{\by}_{k}(s) \hat{\by}_{k}(s)^* - \hat \bPhi_k(s)).
$$

\subsection{Comparison with Assumptions from \cite{lamperski2023nonasymptotic}}
\label{ss:comparison}

In this paper, we assume that $\by[k]$ is an $L$-mixing sequence. In contrast, the work in \cite{lamperski2023nonasymptotic} requires that at least one of following two assumptions hold:
\begin{enumerate}[{A}1)]
\item
  \label{a:gaussian}
  $\by[k]$ is Gaussian
\item
  \label{a:subgaussian}
  There is an impulse  response sequence $h[k]\in\bbR^{n\times m}$ such that
  $\by[k] = \sum_{\ell=-\infty}^{\infty} h[k-\ell]\bzeta[\ell]$, where $\bzeta[k]=\begin{bmatrix}\bzeta_1[k] & \cdots & \bzeta_m[k]\end{bmatrix}^\top$ such that for $i=1,\ldots,m$ and for $k\in \bbZ$, $\bzeta_i[k]$ are independent $\sigma$-sub-Gaussian random variables.
\end{enumerate}


The class of $L$-mixing processes contains a wide variety of processes that cannot be modeled by the assumptions from \cite{lamperski2023nonasymptotic}, including measurements of geometrically ergodic Markov chains, \cite{gerencser2002new}, which can be used to model a wide variety of stable nonlinear stochastic systems. 
\footnote{The work in \cite{gerencser2002new} only proves the $L$-mixing property for measurements of uniformly geometrically ergodic Markov chains satisfying a $1$-step Doeblin condition. Based on our ongoing work, we hypothesize that measurements of irreducible, $V$-uniformly ergodic Markov chains are also $L$-mixing. The class of irreducible, $V$-uniformly ergodic Markov chains is broad, and covers nonlinear stochastic dynamical systems which satisfy a stochastic Lyapunov stability condition. See~\cite{meyn2012markov,douc2018markov}.} 
In particular, measurements of an ergodic Markov chain on a finite state space are $L$-mixing, but do not fit the assumptions from \cite{lamperski2023nonasymptotic}.  On the other hand, in many cases, the processes satisfying the assumptions from \cite{lamperski2023nonasymptotic} are also $L$-mixing. 
%
%
%

Furthermore, the class of $L$-mixing processes is closed under a variety of operations. In particular, 
if $\bzeta[k]$ is $L$-mixing, passing $\bzeta[k]$ through a stable, causal linear filter results in another $L$-mixing sequence. 
(Specific conditions on the filter are discussed in the result below.)
 If $f$ is Lipschitz, then $f(\bzeta[k])$ is $L$-mixing. The product of two $L$-mixing sequences is also $L$-mixing. As a result, rather complex processes can be shown to be $L$-mixing. 

The next result shows under suitable hypotheses on the filter $h$, data satisfying assumption A\ref{a:subgaussian} are also $L$-mixing. It is proved in Subsection~\ref{ss:pf:comparison}.

\begin{proposition}
  \label{prop:comparison}
  {\it
    If $\by[k]$ satisfies A\ref{a:subgaussian} and $h$ is causal with $\sum_{\ell=0}^{\infty}\|h[\ell]\|_2(\ell+1)<\infty$, then $\by[k]$ is $L$-mixing with
    \begin{align*}
      M_q(\by)&\le 8m\sigma \sqrt{q}\sum_{\ell=0}^{\infty} \|h[\ell]\|_{2}\\
\Gamma_{d,q}(\by)&\le 8m\sigma \sqrt{q}\sum_{\ell=0}^{\infty} \|h[\ell]\|_{2}(\ell+1).
    \end{align*}
}
\end{proposition}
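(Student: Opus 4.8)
The plan is to exhibit explicit filtrations under which $\by[k]$ is $L$-mixing and then read off the two constants directly from the causal moving-average representation $\by[k]=\sum_{\ell=0}^{\infty}h[\ell]\bzeta[k-\ell]$, which is valid because causality forces $h[\ell]=0$ for $\ell<0$ and the hypothesis implies $\sum_{\ell}\|h[\ell]\|_2<\infty$, guaranteeing $L_q$-convergence of the series. First I would take $\cF_k=\sigma\{\bzeta[s]:s\le k\}$ and $\cF_k^+=\sigma\{\bzeta[s]:s\ge k+1\}$. These are respectively increasing and decreasing in $k$, and since the $\bzeta[s]$ are independent across $s$, $\cF_k$ and $\cF_k^+$ are independent; causality makes $\by[k]$ measurable with respect to $\cF_k$. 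The discrete-time definitions then reduce, through the floor construction, to statements about these integer-indexed $\sigma$-algebras.

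For the moment bound I would apply Minkowski's inequality twice: once to pull the infinite sum out, $\|\by[k]\|_{L_q}\le\sum_{\ell=0}^{\infty}\|h[\ell]\bzeta[k-\ell]\|_{L_q}$, and once to split $h[\ell]\bzeta[k-\ell]=\sum_{i=1}^{m}(h[\ell]e_i)\bzeta_i[k-\ell]$ into its $m$ scalar contributions. Bounding each column norm by $\|h[\ell]e_i\|_2\le\|h[\ell]\|_2$ and invoking a standard sub-Gaussian moment estimate of the form $\|\bzeta_i[\cdot]\|_{L_q}\lesssim\sigma\sqrt q$ yields $\|h[\ell]\bzeta[k-\ell]\|_{L_q}\le cm\sigma\sqrt q\,\|h[\ell]\|_2$, and summing over $\ell$ gives the stated $M_q$ bound (with $c$ absorbed into the constant $8$). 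Since this bound is uniform in $k$, it controls $\sup_k\|\by[k]\|_{L_q}$.

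The core of the argument is the estimate of $\gamma_q$. The key observation is that conditioning on $\cF_{k-\tau}^+=\sigma\{\bzeta[s]:s\ge k-\tau+1\}$ leaves the terms with $\ell\le\tau-1$ untouched (they are $\cF_{k-\tau}^+$-measurable) and annihilates the terms with $\ell\ge\tau$ (they are independent of $\cF_{k-\tau}^+$ and zero-mean), so the residual collapses to the tail $\by[k]-\bbE[\by[k]\mid\cF_{k-\tau}^+]=\sum_{\ell=\tau}^{\infty}h[\ell]\bzeta[k-\ell]$. Re-running the moment estimate on this tail gives $\gamma_q(\tau,\by)\le cm\sigma\sqrt q\sum_{\ell=\tau}^{\infty}\|h[\ell]\|_2$, uniformly in $k$. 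Finally I would sum over $\tau$ and exchange the order of summation, using $\sum_{\tau=0}^{\infty}\sum_{\ell=\tau}^{\infty}\|h[\ell]\|_2=\sum_{\ell=0}^{\infty}(\ell+1)\|h[\ell]\|_2$, which is precisely where the weight $(\ell+1)$ — and hence the hypothesis $\sum_{\ell}\|h[\ell]\|_2(\ell+1)<\infty$ — enters, yielding the claimed bound on $\Gamma_{d,q}(\by)$.

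I expect the routine parts to be the two Minkowski steps and the summation exchange; the delicate points are pinning down the numerical constant $8$ in the scalar sub-Gaussian moment bound $\|\bzeta_i\|_{L_q}\lesssim\sigma\sqrt q$ together with the dimension factor $m$, and carefully justifying the collapse of the conditional expectation to the tail sum under the floor-based discrete-time $\sigma$-algebra convention. In particular I would verify that the index cutoff falls exactly at $\ell=\tau$ rather than $\ell=\tau\pm 1$, since an off-by-one there would change the final weight from $(\ell+1)$ to $\ell$ and break the match with the stated bound.
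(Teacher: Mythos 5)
Your proposal is correct and follows essentially the same route as the paper: the same filtrations $\cF_k=\sigma\{\bzeta[s]:s\le k\}$, $\cF_k^+=\sigma\{\bzeta[s]:s>k\}$, the same collapse of $\by[k]-\bbE[\by[k]\mid\cF_{k-\tau}^+]$ to the tail $\sum_{\ell\ge\tau}h[\ell]\bzeta[k-\ell]$ with the cutoff correctly at $\ell=\tau$, and the same interchange of summation producing the $(\ell+1)$ weight. The only piece you defer --- the explicit constant $8$ in $\|\bzeta_i\|_{L_q}\le 8\sigma\sqrt{q}$ --- is exactly what the paper spends its remaining effort on, via a tail-integral and Stirling bound on $\Gamma(q/2)$, so filling that in completes the argument as written.
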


\begin{remark}
In the case that $\by[k]$ is Gaussian, under the conditions that $\Phi(s)$ admits a spectral factorization (see \cite{wiener1957prediction}), $\by[k]$ also satisfies A\ref{a:subgaussian} with a causal filter $h$, $m=n$, and $\bzeta[k]$ Gaussian. In particular, when $\by[k]$ is generated by a stable linear Gaussian state space system, $h$ can be computed from the Kalman filter, and the hypotheses of Proposition~\ref{prop:comparison} hold. However, finding more general conditions to ensure that $h$ satisfies the requirements of Proposition~\ref{prop:comparison} is out of the scope of this paper.
\end{remark}
\section{Main Results of Convergence Analysis} \label{sec:analysis}

The main results show both the variance and bias bound of the estimators.
%
The first result bounds the averaged deviation between the algorithm and its mean value. 

\const{const_est}
\begin{theorem} \label{theorem_main}
  {\it
    Let $\by[k]$ be a zero-mean $L$-mixing sequence. Assume that $\alpha_j = \frac{1}{j+1}$, $\forall j \in \bbN $ and $j \in [0, k-1]$, 
    then for all integers $ k \ge 4$ and all $q\ge 1$:
\begin{align*}
\|\hat{\bPhi}_k(s) - \bbE[\hat{\bPhi}_k(s)]\|_{L_{q}} 
 &\le  b_q\frac{1}{\sqrt{k}} \log_2(\log_2 k)
\end{align*}
where $b_q  = 3 \cdot 2^{\frac{25}{4}}\sqrt{3(2q-1)} \left((4q-1)M_{4q}(\by)\Gamma_{d,4q}(\by)\right)^{\frac{3}{4}} \cdot \\
  \left( 4 \frac{M}{K}\sqrt{2(4q-1) M_{4q}(\by) \Gamma_{d,4q}(\by)} + 2 \Gamma_{d,4q}(\by) \right)^{\frac{1}{2}} \\
 + 32 (4q-1) M_{4q}(\by) \Gamma_{d,4q}(\by).$
%
}
\end{theorem}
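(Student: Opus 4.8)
The plan is to regard the matrix increments $\bz_i-\bar z$ as a single $L$-mixing process valued in the finite-dimensional inner-product space $\cY=\bbC^{n\times n}$ with the Frobenius inner product, and to reduce the theorem to a moment inequality for the centered time-average of such a process. Since $\hat\bPhi_k(s)-\bar z=\frac1k\sum_{i=0}^{k-1}(\bz_i-\bar z)$ and the spectral norm is dominated by the Frobenius norm, it suffices to bound the $L_{q}$ norm of the Frobenius norm of this average. Working in $\cY$ rather than entrywise keeps every estimate dimension-free, with the dimension absorbed into $M_{4q}(\by)$ through the vector norm on $\by$. Because $\|\cdot\|_{L_q}\le\|\cdot\|_{L_{2q}}$ for a probability measure, I would actually prove the stronger $L_{2q}$ bound and let it imply the stated $L_q$ bound; this is what lets the final constant depend on the level-$4q$ quantities.

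The core tool is a Gerencs\'er-type moment inequality for the normalized sum of a zero-mean $L$-mixing process $\bu$: for $k\ge 4$, $\big\|\frac1k\sum_{i<k}\bu_i\big\|_{L_p}\le \frac{C\sqrt{3(p-1)}}{\sqrt k}\log_2(\log_2 k)\sqrt{M_p(\bu)\,\Gamma_{d,p}(\bu)}$. I would establish (or invoke) this by a square-root blocking recursion: split $[0,k)$ into $\sqrt k$ contiguous blocks of length $\sqrt k$, bound each block-average by the same inequality one level down, and recurse. The recursion terminates after $\log_2\log_2 k$ halvings of the exponent, which is precisely where the doubly-logarithmic factor enters. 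Applying this inequality to $\bu=\bz-\bar z$ at level $p=2q$ reduces the whole theorem to controlling $M_{2q}(\bz)$ and $\Gamma_{d,2q}(\bz)$, and produces the outer constant $\sqrt{3(2q-1)}$.

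The main work, and the main obstacle, is the quantitative propagation of $L$-mixing constants through two nonlinear operations while tracking the moment index. First, $\hat\by_i(s)=\frac1{\sqrt M}\sum_{k=0}^{M-1}e^{-j2\pi ks}\by[iK+k]$ is itself a normalized windowed sum, so rather than the crude triangle bound $M_p(\hat\by)\le\sqrt M\,M_p(\by)$ I would again apply the partial-sum inequality to remove the $\sqrt M$ and obtain $\|\hat\by_i\|_{L_p}\le\sqrt{2(p-1)M_p(\by)\Gamma_{d,p}(\by)}$; at $p=4q$ this is exactly the factor $\sqrt{2(4q-1)M_{4q}(\by)\Gamma_{d,4q}(\by)}$ appearing in $b_q$. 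The block-index mixing $\Gamma_{d,4q}(\hat\by)$ must also be controlled in terms of $\by$: because consecutive windows overlap whenever $M>K$, the block process retains dependence over roughly $M/K$ lags, which is the origin of the overlap factor $\tfrac MK$ and of the parenthesized term $4\tfrac MK\sqrt{2(4q-1)M_{4q}\Gamma_{d,4q}}+2\Gamma_{d,4q}$. Second, $\bz_i=\hat\by_i\hat\by_i^*$ is a product, so the product rule for $L$-mixing, obtained from a H\"older splitting of the form $\|\hat\by_i\,\xi^*+\xi\,\hat\by_i^*-\xi\xi^*\|_{L_{2q}}\le 2M_{4q}(\hat\by)\,\gamma_{4q}(\tau,\hat\by)+\gamma_{4q}(\tau,\hat\by)^2$ (where $\xi=\hat\by_i-\bbE[\hat\by_i\mid\cF^+_{i-\tau}]$), forces a doubling of the moment index. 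This is why the level-$2q$ constants of $\bz$ require the level-$4q$ constants of $\hat\by$, and ultimately why $b_q$ depends on $M_{4q}(\by)$ and $\Gamma_{d,4q}(\by)$; the product-rule coefficients also supply the leading $(4q-1)$ prefactors.

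Finally, I would assemble the pieces by substituting $M_{2q}(\bz)\le 2(4q-1)M_{4q}(\by)\Gamma_{d,4q}(\by)$ and the corresponding bound on $\Gamma_{d,2q}(\bz)$ into the normalized-sum inequality at level $2q$, and collecting numerical constants to recover $b_q$. The delicate point throughout is keeping the chain of index doublings $q\to 2q\to 4q$ and the two separate uses of the partial-sum inequality (once for $\hat\by$, once for the final average) mutually consistent. I expect the single hardest step to be the rigorous bound on the block-index mixing $\Gamma_{d,4q}(\hat\by)$, since it requires relating the forward $\sigma$-algebras of the block-indexed process to those of $\by$ and correctly counting the $M/K$ overlapping lags; an error there would directly corrupt the stated constant.
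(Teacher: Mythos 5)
Your overall architecture matches the paper's: both routes reduce the theorem to (i) a Gerencs\'er-type $L_{2q}$ moment inequality for weighted sums of a zero-mean $L$-mixing process, (ii) a windowing lemma showing that $\hat\by_i$ is $L$-mixing with $M_{2q}(\hat\by)\le 2\sqrt{2(2q-1)M_{2q}(\by)\Gamma_{d,2q}(\by)}$ and with a mixing constant carrying the overlap factor $M/K$, and (iii) a product lemma for $\hat\by_i\hat\by_i^\star$ that doubles the moment index $2q\to 4q$ via Cauchy--Schwarz. Your identification of where the $M/K$ factor and the level-$4q$ constants originate is exactly right, and your product decomposition is only missing the term $\bbE[\xi\xi^\star\mid\cG_{i-\tau}^+]$ (with $\xi=\hat\by_i-\bbE[\hat\by_i\mid\cG_{i-\tau}^+]$), which the paper retains and controls by Jensen's inequality; this only perturbs numerical constants.

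The one genuine weak point is your ``core tool.'' You posit $\bigl\|\tfrac1k\sum_{i<k}\bu_i\bigr\|_{L_p}\lesssim \tfrac{\log_2\log_2 k}{\sqrt k}\sqrt{M_p(\bu)\Gamma_{d,p}(\bu)}$ and propose to prove it by a square-root blocking recursion terminating after $\log_2\log_2 k$ ``halvings of the exponent.'' As described, that recursion does not close: you do not say how a block average is bounded ``one level down,'' the blocks are not independent, and no concrete mechanism produces the doubly-logarithmic factor. Fortunately the tool you need is both standard and stronger than what you state: the weighted-sum moment inequality (the paper's Corollary~\ref{cor:sum}) applied with constant weights $w_i=1/k$ already gives $\|\tfrac1k\sum_{i<k}\bu_i\|_{L_{2q}}\le 2\sqrt{2(2q-1)M_{2q}(\bu)\Gamma_{d,2q}(\bu)}\,k^{-1/2}$ with no log-log factor at all. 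In the paper the $\log_2\log_2 k$ arises elsewhere: the authors analyze the recursion $\bx_{k+1}=\bx_k+\alpha_k(\bz_k-\bx_k)$, bound the weights $\beta_{i,k}=\alpha_i\prod_{j=i+1}^{k-1}(1-\alpha_j)$ crudely by $\alpha_i$ (so that $\sum_i\beta_{i,k}^2$ is no longer $O(1/k)$), and recover the $k^{-1/2}$ rate by restarting the estimate at the points $s_l=k^{1-2^{-l}}$, which requires $m\approx\log_2\log_2 k$ stages. So if you replace your blocking recursion by a direct appeal to the weighted-sum inequality, your route goes through (and would in fact yield the bound without the $\log_2\log_2 k$ factor); as written, however, the proof of your key lemma is a gap.
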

%
%
The result of Theorem~\ref{theorem_main} holds in expectation. When the factor in the bound, $b_q$, grows polynomially with $q$, Theorem~\ref{theorem_main} implies a bound that holds in high probability. In the Markov chain example from Section~\ref{sec:sim}, and  also in the processes satisfying the hypotheses of Proposition~\ref{prop:comparison}, the polynomial growth assumption holds, and so the bounds can be computed explicitly.


\begin{theorem}
  \label{thm:highProb}
  {\it
    If there are constants $c>0$ and $r>0$ such that $b_q$ from Theorem~\ref{theorem_main} satisfies $b_q\le cq^r$ for all $q\ge 1$,
    then for all $\nu\in (0,1)$ and all $k\ge 4$:
    \begin{multline*}
      \bbP \left(
        \|\hat{\bPhi}_k(s)-\bbE[\hat{\bPhi}_k(s)]\|_F>\right.\\
      \left. c \frac{\log_2(\log_2(k))}{\sqrt{k}} e^r\max\left\{1,\frac{(\ln \nu^{-1})^{r}}{r^r}\right\}
      \right)
      \le \nu.
      \end{multline*}
  }
\end{theorem}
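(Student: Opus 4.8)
The plan is to convert the $L_q$-moment bound from Theorem~\ref{theorem_main} into a tail bound via Markov's inequality, then optimize over the free parameter $q$. Let me set up the notation: write $\bX_k = \|\hat{\bPhi}_k(s)-\bbE[\hat{\bPhi}_k(s)]\|_F$ and let $a_k = \frac{\log_2(\log_2 k)}{\sqrt{k}}$ so that Theorem~\ref{theorem_main} together with the hypothesis $b_q\le cq^r$ gives $\|\bX_k\|_{L_q}\le c q^r a_k$ for every $q\ge 1$. Note that since the Frobenius norm of $\hat{\bPhi}_k(s)-\bar z$ equals the norm $\|\cdot\|$ appearing in the definition of $\|\cdot\|_{L_q}$ on the matrix space $\cY$, the quantity bounded in Theorem~\ref{theorem_main} is exactly $\|\bX_k\|_{L_q}$.

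The main step is a standard moment-to-tail conversion. By Markov's inequality applied to $\bX_k^q$, for any threshold $t>0$ and any $q\ge 1$,
\begin{align*}
\bbP(\bX_k > t) \le \frac{\bbE[\bX_k^q]}{t^q} = \left(\frac{\|\bX_k\|_{L_q}}{t}\right)^q \le \left(\frac{c q^r a_k}{t}\right)^q.
\end{align*}
The goal is to choose $t$ of the form $t = c a_k e^r\max\{1, (\ln\nu^{-1})^r/r^r\}$ and then select $q$ so that the right-hand side is at most $\nu$. Taking logarithms, I would require $q\bigl(\ln(c q^r a_k) - \ln t\bigr)\le \ln\nu$, i.e. $q\bigl(r\ln q - \ln(t/(c a_k))\bigr)\le \ln\nu = -\ln\nu^{-1}$.

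The optimization is the crux. Writing $s = t/(c a_k) = e^r\max\{1,(\ln\nu^{-1})^r/r^r\}$, I need to exhibit a choice of $q\ge 1$ making $q(r\ln q - \ln s)\le -\ln\nu^{-1}$. The natural candidate is $q = (\ln\nu^{-1})/r$ when this is at least $1$; then $q^r = (\ln\nu^{-1})^r/r^r$, and the factor $e^r$ in $s$ is designed precisely so that $\ln s = r + r\ln q$ (in the regime $\ln\nu^{-1}\ge r$), giving $r\ln q - \ln s = -r$, hence the bracket equals $-rq = -\ln\nu^{-1}$, so the exponent is exactly $-\ln\nu^{-1}$ and the bound reads $e^{-\ln\nu^{-1}}=\nu$. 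The boundary case $\ln\nu^{-1}< r$ forces $q<1$, which is not allowed, so there one instead takes $q=1$ and uses the $\max\{1,\cdot\}$ and the $e^r$ factor to absorb the discrepancy; here $s\ge e^r$ makes $(c a_k q^r/t)^q=(1/s)^1\le e^{-r}\le \nu$ precisely when $\ln\nu^{-1}\le r$. I expect the main obstacle to be verifying cleanly that the single formula $t = c a_k e^r\max\{1,(\ln\nu^{-1})^r/r^r\}$ works across both regimes $\ln\nu^{-1}\gtrless r$ with the constraint $q\ge 1$, which amounts to checking that the function $q\mapsto q(r\ln q - \ln s)$ attains a value $\le -\ln\nu^{-1}$ at the claimed $q$; this is a short but slightly delicate case analysis rather than a deep argument, and the constants $e^r$ and the $\max$ are engineered exactly so that both cases close.
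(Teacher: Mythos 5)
Your proposal is correct and follows essentially the same route as the paper: Markov's inequality applied to the $q$-th moment bound from Theorem~\ref{theorem_main}, followed by an optimization over $q$ whose solution is $q=\ln(\nu^{-1})/r$ in the regime $\ln\nu^{-1}\ge r$ and $q=1$ otherwise (the paper derives this $q$ as the minimizer of the convex log-bound $q\ln(\epsilon^{-1}f_k)+rq\ln q$ rather than verifying it directly, but the choice and the resulting two-case analysis via the $\max$ are identical). Both case computations in your sketch check out, so the argument closes as you describe.
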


The following result is an immediate consequence of Proposition~\ref{prop:comparison} and Theorem~\ref{thm:highProb}. It enables direct comparison with the results from \cite{lamperski2023nonasymptotic} in the case when the data can also be shown to be $L$-mixing.

\begin{corollary}
  \label{cor:subGaussianMoments}
  {\it
If $\by[k]$ satisfies A\ref{a:subgaussian} and $h$ is causal with $\sum_{\ell=0}^{\infty}\|h[\ell]\|_2(\ell+1)<\infty$, then there are constants $c>0$ and $r>0$ such that $b_q\le cq^r$ for all $q\ge 1$. Consequently, the bound from Theorem~\ref{thm:highProb} holds. 
}
\end{corollary}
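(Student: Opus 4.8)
The plan is to substitute the moment and mixing bounds furnished by Proposition~\ref{prop:comparison} into the explicit formula for $b_q$ from Theorem~\ref{theorem_main}, and then verify that the resulting expression is dominated by a constant times a fixed power of $q$. Once a polynomial bound $b_q \le c q^r$ is in hand, the conclusion is nothing more than an invocation of Theorem~\ref{thm:highProb}.

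First I would record the consequences of the summability hypothesis. Writing $S_0 = \sum_{\ell=0}^{\infty}\|h[\ell]\|_2$ and $S_1 = \sum_{\ell=0}^{\infty}\|h[\ell]\|_2(\ell+1)$, both are finite (the former because $S_0\le S_1$), and applying Proposition~\ref{prop:comparison} with the index $4q$ in place of $q$ yields $M_{4q}(\by)\le 16 m\sigma S_0\sqrt{q}$ and $\Gamma_{d,4q}(\by)\le 16 m\sigma S_1\sqrt{q}$. The key structural fact is that both quantities grow like $\sqrt{q}$, so I would introduce a single constant $C = 16 m\sigma\max\{S_0,S_1\}$ giving $M_{4q}(\by)\le C\sqrt{q}$ and $\Gamma_{d,4q}(\by)\le C\sqrt{q}$ simultaneously.

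Next I would track the degree in $q$ of each factor of $b_q$, using the uniform surrogates $4q-1\le 4q$ and $2q-1\le 2q$ valid for $q\ge 1$. The product $M_{4q}(\by)\Gamma_{d,4q}(\by)$ is $O(q)$, so the outer prefactor $128(4q-1)\sqrt{3(2q-1)}M_{4q}(\by)\Gamma_{d,4q}(\by)$ has degree $1+\tfrac12+1=\tfrac52$ in $q$. Inside the half-power factor, the radical $\sqrt{2(4q-1)M_{4q}(\by)\Gamma_{d,4q}(\by)}$ has argument of degree $2$ and hence contributes degree $1$, while $2\Gamma_{d,4q}(\by)$ contributes degree $\tfrac12$; the parenthesized sum is therefore $O(q)$ and its square root is $O(\sqrt q)$. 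The first term of $b_q$ thus has degree $\tfrac52+\tfrac12=3$, and the second term $8(4q-1)M_{4q}(\by)\Gamma_{d,4q}(\by)$ has degree $2$. Hence $b_q = O(q^3)$, and collecting the implied constants gives a $c$ (depending only on $m$, $\sigma$, $M/K$, $S_0$, $S_1$) with exponent $r=3$, so that $b_q\le c q^r$ for all $q\ge 1$.

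The final step is immediate: with the hypothesis $b_q\le c q^r$ now verified, Theorem~\ref{thm:highProb} applies verbatim and delivers the stated high-probability concentration bound. There is no genuine obstacle here, since the content of the corollary is purely the bookkeeping of powers of $q$ through the nested radicals of $b_q$. The only point requiring mild care is that the degree count must be carried out with the worst-case (largest) growth in each factor so that the bound is uniform over all $q\ge 1$ rather than merely asymptotic; replacing each $4q-1$ and $2q-1$ by its larger linear surrogate on $q\ge 1$ makes this uniform control transparent.
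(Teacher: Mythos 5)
Your proposal is correct and matches the paper's own (very brief) argument: the paper simply declares the corollary an immediate consequence of Proposition~\ref{prop:comparison} and Theorem~\ref{thm:highProb}, which is precisely the substitution-and-degree-count you carry out, arriving at $b_q = O(q^3)$. The only thing you add is the explicit bookkeeping, which is harmless and correct.
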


\begin{remark}
  The estimate, $\hat{\bPhi}_k(s)$ depends on the data $\by[0],\ldots,\by[(k-1)K+(M-1)]$. In other words, the total amount of data is $N=(k-1)K+(M-1)$. In the typical setup of Bartlett or Welch methods, $K\le M$, so that $k\le \frac{N}{K}$. It follows that, in terms of the total amount of data, the bound from Theorem~\ref{thm:highProb} is $O\left(\frac{\log_2(\log_2(N/K))}{\sqrt{N/K}} \right)$, which matches the corresponding bound from \cite{lamperski2023nonasymptotic} up to logarithmic factors.  
\end{remark}

 The spectral estimators are biased, meaning that $\Phi(s)\ne \bbE[\hat{\bPhi}_k(s)]$. So, full error bounds must also include bounds on the bias. In the case of $L$-mixing data, these bounds can be computed in terms of the mixing properties and parameters of the algorithms. 

\begin{proposition} \label{prop:bias}
  {\it
  If $\by[k]$ is $L$-mixing then:
  \begin{itemize}
  \item The bias of the Bartlett estimator is bounded by
  \begin{multline*}
    \|\Phi(s)-\bbE[\hat{\bPhi}_k(s)]\|_2\le  M_q(\by)\sum_{|k|\ge M}\gamma_2(|k|,\by)+ \\
    \frac{M_q(\by)}{M}\sum_{|k|<M} |k| \gamma_2(|k|,\by).
  \end{multline*}
  \item The bias of the Welch estimator is bounded by
  \begin{multline*}
    \|\Phi(s)-\bbE[\hat{\bPhi}_k(s)]\|_2\le  M_q(\by)\sum_{|k|\ge M}\gamma_2(|k|,\by)+\\
    M_q(\by)\sum_{|k|<M}\gamma_2(|k|,\by)\sum_{i=|k|}^{M-1}\frac{v_{i-|k|}v_i}{\|v\|_2^2}.
  \end{multline*}
  \end{itemize}
  }
\end{proposition}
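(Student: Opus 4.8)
The plan is to pass to the lag (autocovariance) domain, where the bias of either estimator becomes a scalar-weighted sum of the matrices $R[m]$, the weights being fixed entirely by the analysis window. First I would evaluate $\bbE[\hat{\bPhi}_k(s)]$ in closed form. Expanding the outer product gives $\hat\by_i(s)\hat\by_i(s)^\star=\sum_{k,l=0}^{M-1}w_k(s)\overline{w_l(s)}\,\by[iK+k]\by[iK+l]^\top$, and since $\by$ is real, zero-mean and stationary, taking expectations with $\bbE[\by[iK+k]\by[iK+l]^\top]=R[k-l]$ yields $\bbE[\hat\by_i(s)\hat\by_i(s)^\star]=\sum_{k,l=0}^{M-1}w_k(s)\overline{w_l(s)}R[k-l]$, which is independent of the segment index $i$; averaging over the $L$ segments leaves it unchanged. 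Re-indexing by the lag $m=k-l$ and inserting $w_k(s)=\frac{v_k}{\|v\|_2}e^{-j2\pi ks}$ (with $v_k\equiv 1/\sqrt{M}$ in the Bartlett case) cancels the phases down to $e^{-j2\pi ms}$ and collects the window into the lag window $c[m]:=\sum_{i=|m|}^{M-1}\frac{v_{i-|m|}v_i}{\|v\|_2^2}$, so that
\[
\bbE[\hat{\bPhi}_k(s)]=\sum_{|m|<M}c[|m|]\,e^{-j2\pi ms}R[m],
\]
where $c[0]=1$ because $w(s)$ is a Euclidean unit vector, and $c[m]=1-|m|/M$ is the triangular window in the Bartlett case.

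Subtracting from $\Phi(s)=\sum_{m=-\infty}^{\infty}e^{-j2\pi ms}R[m]$ splits the bias into a leakage part over the lags outside the window and a near-diagonal part inside it,
\[
\Phi(s)-\bbE[\hat{\bPhi}_k(s)]=\sum_{|m|\ge M}e^{-j2\pi ms}R[m]+\sum_{|m|<M}\bigl(1-c[|m|]\bigr)e^{-j2\pi ms}R[m].
\]
Since $|e^{-j2\pi ms}|=1$ and the window weights are scalars, taking $\|\cdot\|_2$ and the triangle inequality reduces the whole statement to bounding each $\|R[m]\|_2$ and re-weighting. For Bartlett, $1-c[|m|]=|m|/M$ produces the factor $\frac{|m|}{M}$ of the first display; for Welch the near-diagonal term is governed by the window autocorrelation $c[|m|]=\sum_{i=|m|}^{M-1}\frac{v_{i-|m|}v_i}{\|v\|_2^2}$, the factor recorded in the second display.

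The heart of the argument, and the step I expect to be the main obstacle, is a lemma bounding $\|R[m]\|_2$ by the mixing coefficient $\gamma_2(|m|,\by)$; this is the only place the $L$-mixing hypothesis enters. Fix $m\ge 1$ and write $R[m]=\bbE[\by[m]\by[0]^\top]$, with $\by[0]$ measurable with respect to $\cF_0$. Conditioning the leading factor on the future $\sigma$-algebra $\cF_0^+$, the independence of $\cF_0$ and $\cF_0^+$ together with $\bbE[\by[0]]=0$ kills the conditional-mean contribution, because $\bbE\bigl[\bbE[\by[m]\mid\cF_0^+]\,\by[0]^\top\bigr]=\bbE\bigl[\bbE[\by[m]\mid\cF_0^+]\bigr]\,\bbE[\by[0]]^\top=0$. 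Hence
\[
R[m]=\bbE\Bigl[\bigl(\by[m]-\bbE[\by[m]\mid\cF_0^+]\bigr)\by[0]^\top\Bigr],
\]
and estimating the spectral norm of this rank-one expectation by $\bbE\bigl[\|\by[m]-\bbE[\by[m]\mid\cF_0^+]\|_2\,\|\by[0]\|_2\bigr]$ followed by Cauchy--Schwarz in $L_2$ yields $\|R[m]\|_2\le\|\by[m]-\bbE[\by[m]\mid\cF_0^+]\|_{L_2}\,\|\by[0]\|_{L_2}$. The first factor is at most $\gamma_2(m,\by)$ by definition (take $t=m$, $\tau=m$, so $\cF_{t-\tau}^+=\cF_0^+$), and the second is at most $M_2(\by)\le M_q(\by)$. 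The delicate point is selecting precisely this future $\sigma$-algebra: it must be fine enough that the residual is controlled by $\gamma_2(m,\by)$, yet coarse enough that $\by[0]$ stays independent of the conditional mean so the zero-mean cancellation applies.

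Finally I would substitute this estimate into the two bias terms. The clean conditioning step already gives $\|R[m]\|_2\le M_q(\by)\gamma_2(|m|,\by)$, so the factor $2$ in the proposition is slack; it also covers a two-sided covariance estimate, since $R[-m]=R[m]^\top$ by stationarity gives the same norm. With $\|R[m]\|_2\le 2M_q(\by)\gamma_2(|m|,\by)$, the leakage sum becomes $2M_q(\by)\sum_{|m|\ge M}\gamma_2(|m|,\by)$, while the near-diagonal sum becomes $\frac{2M_q(\by)}{M}\sum_{|m|<M}|m|\,\gamma_2(|m|,\by)$ for Bartlett and the analogous sum weighted by the window autocorrelation for Welch; the lag $m=0$ drops out since $1-c[0]=0$. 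The only remaining routine checks are the convergence of these lag sums (guaranteed by $\Gamma_{d,2}(\by)<\infty$, of which the leakage sum is a tail) and the uniformity of the bound in $s$.
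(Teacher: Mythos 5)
Your proposal follows essentially the same route as the paper. The core step---bounding $\|R[m]\|_2$ by splitting $\by[m]$ into $(\by[m]-\bbE[\by[m]\mid\cF_0^+])+\bbE[\by[m]\mid\cF_0^+]$, killing the second term via the independence of $\cF_0$ and $\cF_0^+$ together with the zero-mean assumption, and applying Cauchy--Schwarz to the first---is exactly the paper's argument (the paper carries it out for general indices $i\le j$ and, like you, ends up with a slack factor of $2$ in $\|R[k]\|_2\le 2M_2(\by)\gamma_2(|k|,\by)$). The lag-window expansion of $\bbE[\hat{\bPhi}_k(s)]$ that you work out explicitly is precisely the part the paper outsources to the bias calculations of its reference, so spelling it out is a self-contained addition rather than a deviation. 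The one loose end is the Welch case: your own decomposition produces the near-diagonal weight $\lvert 1-c[|m|]\rvert$, yet you then assert that the factor appearing in the stated bound is $c[|m|]$ itself; these do not coincide in general (for $|m|$ near $M$ the autocorrelation $c[|m|]$ is small while $1-c[|m|]$ is near $1$), so that substitution either needs a justification or the reconciliation with the displayed Welch bound needs to be revisited.
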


The current bias bounds depend on the mixing properties in a rather complicated way. More explicit bounds, similar to those sketched in \cite{lamperski2023nonasymptotic} could be obtained, in particular, when $\gamma_2(k,\by)$ decreases exponentially. However, the corresponding formulas are  outside the scope of this work. 

\section{L-mixing Properties} \label{sec:L-mixing_detail}
This section is in preparation to prove Theorem~\ref{theorem_main}. 

\subsection{Variations on Classical $L$-mixing Results}

We present extensions of classical $L$-mixing results to the case of stochastic processes taking values in an arbitrary finite-dimensional inner  product space. We sketch the proofs following the methods in \cite{gerencser1989class}.

The following generalizes Lemma 2.3 of \cite{gerencser1989class}.
\begin{lemma}
  {\it
    \label{lem:innerProd}
  If $\by_t\in\cY$ is a zero-mean $L$-mixing process with respect to $(\cF,\cF^+)$ and $\bz\in\cY$ is $\cF_s$-measurable  with $s\le t$, then for any $p\ge 1$ and $q\ge 1$ with $\frac{1}{p}+\frac{1}{q}$, we have
  $$
  \left|\bbE\left[\langle \by_t,\bz\rangle\right] \right|\le 2\gamma_p(t-s,\by)\|\bz\|_{L_q}.
  $$
  }
\end{lemma}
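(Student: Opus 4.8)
The plan is to mimic the standard covariance-decomposition proof of mixing inequalities (Lemma 2.3 of \cite{gerencser1989class}), taking care to respect the bilinear structure of the inner product when passing from scalars to $\cY$-valued variables. Set $\tau = t-s \ge 0$, so that by the definition of $\gamma_p$ the natural approximant of $\by_t$ is its conditional expectation given $\cF_{t-\tau}^+ = \cF_s^+$. I would begin by splitting
\begin{equation*}
\bbE[\langle \by_t, \bz\rangle] = \bbE\left[\langle \by_t - \bbE[\by_t \mid \cF_s^+],\, \bz\rangle\right] + \bbE\left[\langle \bbE[\by_t \mid \cF_s^+],\, \bz\rangle\right],
\end{equation*}
and treat the two terms separately: the first (the \emph{rough} part) is small because $\by_t$ is well approximated by its $\cF_s^+$-conditional expectation, while the second (the \emph{smooth} part) vanishes by independence and the zero-mean assumption.

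For the smooth term, the key observation is that $\bbE[\by_t \mid \cF_s^+]$ is $\cF_s^+$-measurable whereas $\bz$ is $\cF_s$-measurable, and $\cF_s$ and $\cF_s^+$ are independent by hypothesis; hence the two random vectors are independent. The step where the vector extension actually requires care is showing that independence still forces the inner product to decouple: I would expand both vectors in a fixed orthonormal basis of $\cY$ and use that, for independent $\cY$-valued random vectors $U$ and $Z$, one has $\bbE[\langle U, Z\rangle] = \langle \bbE[U], \bbE[Z]\rangle$. Applying this with $U = \bbE[\by_t \mid \cF_s^+]$ and invoking the tower property together with $\bbE[\by_t] = 0$ gives $\bbE[U] = \bbE[\by_t] = 0$, so the smooth term equals $\langle 0, \bbE[\bz]\rangle = 0$. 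Integrability is not an issue: conditional expectation is an $L_p$-contraction, so $U \in L_p$, and $\bz \in L_q$ with $\tfrac{1}{p} + \tfrac{1}{q} = 1$ makes $\langle U, \bz\rangle$ integrable by H\"older.

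For the rough term I would bound pointwise by the Cauchy--Schwarz inequality on $\cY$, namely $|\langle \by_t - \bbE[\by_t\mid\cF_s^+], \bz\rangle| \le \|\by_t - \bbE[\by_t\mid\cF_s^+]\|\,\|\bz\|$, take expectations, and apply H\"older's inequality with exponents $p, q$ to obtain $\|\by_t - \bbE[\by_t \mid \cF_s^+]\|_{L_p}\,\|\bz\|_{L_q}$. Since $t \ge \tau = t-s$, the definition of $\gamma_p$ gives $\|\by_t - \bbE[\by_t\mid \cF_s^+]\|_{L_p} \le \gamma_p(t-s, \by)$, yielding the bound $\gamma_p(t-s,\by)\,\|\bz\|_{L_q}$. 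Combining the two terms produces the inequality with constant $1$, which in particular establishes the stated bound with the constant $2$ (kept to match the convention of \cite{gerencser1989class}). The main obstacle is the bilinearity bookkeeping in the smooth term: unlike the scalar case one cannot simply factor a product of random variables, and the decoupling must be justified through the orthonormal-basis computation above; everything else is a routine application of Cauchy--Schwarz and H\"older together with the definition of $\gamma_p$.
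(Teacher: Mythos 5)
Your proof is correct and is essentially the argument the paper intends: it carries out Gerencs\'er's covariance-decomposition proof with multiplication replaced by the inner product, which is exactly what the paper's proof sketch prescribes, and you correctly handle the one vector-specific point (decoupling $\bbE[\langle U,\bz\rangle]$ for independent $U$ and $\bz$ via a basis expansion). Your observation that the argument actually yields the constant $1$ rather than $2$ is also right; the factor $2$ is inherited slack from the cited lemma.
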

\begin{proof}[Sketch]
  Follow the same steps as the proof  in \cite{gerencser1989class}, but replace multiplication with
 an inner product.
\end{proof}

The following result generalizes Theorem 1.1 of \cite{gerencser1989class} to vector-valued $\bu_t$ and complex-valued $f_t$.  
\begin{lemma}
  \label{lem:integral}
  {\it
    Let $\bu_t\in\cY$ be a zero-mean $L$-mixing process and let $f_t\in \bbC$. For all $m\ge 1$ and all $T\ge 0$:
    \begin{align*}
      \left\|\int_0^Tf_t \bu_t dt\right\|_{L_{2m}} \hspace{-18pt} \le 
      2\left((2m-1)M_{2m}(\bu)\Gamma_{2m}(\bu)\hspace{-3pt}\int_0^T \hspace{-3pt} |f_t|^2 dt\right)^{\frac{1}{2}}\hspace{-5pt}.
    \end{align*}    
  }
\end{lemma}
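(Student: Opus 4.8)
The plan is to follow the proof of Theorem~1.1 in \cite{gerencser1989class}, adapting each step to the vector/complex setting; the two substitutions are (i) ordinary products become the inner product $\langle\cdot,\cdot\rangle$ on the complexification of $\cY$, so that Lemma~\ref{lem:innerProd} takes over the role of the scalar covariance estimate, and (ii) the scalar map $x\mapsto x^{2m-1}$ is replaced by $\bx\mapsto\|\bx\|^{2m-2}\bx$, whose derivative has operator norm at most $(2m-1)\|\bx\|^{2m-2}$. Writing $\mathbf{S}_T=\int_0^T f_t\bu_t\,dt$ and $\beta(t)=\|\mathbf{S}_t\|_{L_{2m}}$, the goal is an inequality of the form $\beta(T)^2\le 4(2m-1)M_{2m}(\bu)\Gamma_{2m}(\bu)\int_0^T|f_t|^2\,dt$, whose square root is the claim.

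First I would reduce the $2m$-th moment to an integral of correlations. Since $\frac{d}{dt}\|\mathbf{S}_t\|^2=2\,\mathrm{Re}\,\langle f_t\bu_t,\mathbf{S}_t\rangle$ and $\mathbf{S}_0=0$, differentiating $\|\mathbf{S}_t\|^{2m}=(\langle\mathbf{S}_t,\mathbf{S}_t\rangle)^m$ and integrating in $t$ gives $\bbE[\|\mathbf{S}_T\|^{2m}]=2m\int_0^T\mathrm{Re}\big(f_t\,\bbE[\langle\bu_t,\mathbf{V}_t\rangle]\big)\,dt$, where $\mathbf{V}_t=\|\mathbf{S}_t\|^{2m-2}\mathbf{S}_t$.

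The key step is to expose the temporal lag. Because $\mathbf{V}_0=0$, I would write $\mathbf{V}_t=\int_0^t\dot{\mathbf{V}}_s\,ds$; for each $s<t$ the integrand $\dot{\mathbf{V}}_s$ is $\cF_s$-measurable, so Lemma~\ref{lem:innerProd} applies to $\bbE[\langle\bu_t,\dot{\mathbf{V}}_s\rangle]$ and produces the factor $\gamma_{2m}(t-s,\bu)$. The derivative bound above gives $\|\dot{\mathbf{V}}_s\|\le(2m-1)|f_s|\,\|\bu_s\|\,\|\mathbf{S}_s\|^{2m-2}$, and Hölder (exponents $2m$ and $2m/(2m-1)$, splitting the last factor at $2m/(2m-2)$) bounds its $L_{2m/(2m-1)}$-norm by $(2m-1)|f_s|M_{2m}(\bu)\beta(s)^{2m-2}$. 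Substituting back yields the convolution estimate $\beta(T)^{2m}\le 4m(2m-1)M_{2m}(\bu)\int_0^T\!\!\int_0^t|f_t||f_s|\gamma_{2m}(t-s,\bu)\beta(s)^{2m-2}\,ds\,dt$. Finally, $|f_t||f_s|\le\tfrac12(|f_t|^2+|f_s|^2)$ together with $\int_0^t\gamma_{2m}(\tau,\bu)\,d\tau\le\Gamma_{2m}(\bu)$ (applying Fubini to each piece) collapses the double integral to $\Gamma_{2m}(\bu)\int_0^T|f_t|^2\,dt$, and solving the resulting inequality for $\beta(T)$ gives the bound.

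I expect the main obstacle to be the constant bookkeeping in this last step: the naive bound $\beta(s)\le\beta(T)$ loses a factor of $m$, so to reach the sharp coefficient $2\sqrt{(2m-1)M_{2m}(\bu)\Gamma_{2m}(\bu)}$ one must solve the integral inequality in $\beta$ carefully, exactly as in \cite{gerencser1989class}, rather than replacing $\beta(s)$ by its supremum. Notably, the discrete-time endpoint $m=1$ already shows the coefficient is correct, since the continuous-time diagonal $\{s=t\}$ has measure zero. The remaining subtlety is purely notational: tracking real parts and the Hermitian inner product when $f_t$ is complex, which is precisely why Lemma~\ref{lem:innerProd} is stated with $|\bbE[\langle\bu_t,\bz\rangle]|$ and the factor $2$.
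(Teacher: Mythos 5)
Your proposal is correct and follows essentially the same route as the paper, whose own proof is only a sketch deferring to Theorem~1.1 of \cite{gerencser1989class}: you make the same two substitutions the paper highlights (inner products via Lemma~\ref{lem:innerProd}, and the map $\bx\mapsto\|\bx\|^{2m-2}\bx$ with derivative bound $(2m-1)\|\bx\|^{2m-2}$), arrive at the same convolution estimate (Gerencs\'er's Equation~(2.9)), and correctly note that the sharp constant requires solving the resulting integral inequality rather than crudely bounding $\beta(s)$ by its supremum. No gaps.
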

\begin{proof}[Sketch]
  Let $\bx_t = \int_0^t f_s \bu_s ds$. 
  The main difference from the proof in \cite{gerencser1989class} is that we get different expressions for the required derivatives of $\bx_t$:
\begin{align*}
  \frac{d}{dt}\|\bx_t\|^{2m}&=2m \|\bx_t\|^{2(m-1)} \Re\langle \bx_t,f_t \bu_t \rangle \\
  \frac{d}{dt}\left(\|\bx_t\|^{2(m-1)} \bx_t\right) &=\|\bx_t\|^{2(m-1)}\alpha_t \bu_t+\\
  &\hspace{-40pt}(m-1)\|\bx_t\|^{2(m-2)}\left(\langle \bx_t,f_t \bu_t\rangle + \langle f_t \bu_t ,\bx_t\rangle \right) \bx_t.
\end{align*}
After upper-bounding (via Lemma~\ref{lem:innerProd}), the same bound derived in Equation (2.9) of \cite{gerencser1989class} holds in this more general case. The proof is identical after that. 
\end{proof}

The next result follows from Lemma~\ref{lem:integral} applied to $\by_{t}=\by_{\floor{t}}$ and $w_t=w_{\floor{t}}$, and then using the bounds on $\Gamma_{d,q}(\by)$. 

\begin{corollary}
  {\it
    \label{cor:sum}
    Let $\by_k\in\cY$ be a zero-mean $L$-mixing discrete-time process and let $w_k\in\bbC$. For all $M\ge 1$ and all $q\ge 1$:
    \begin{align*}
    \left\|\sum_{k=0}^{M-1}w_k \by_k \right\|_{L_{2q}} \hspace{-15pt} \le 
      2 \left(2(2q-1)M_{2q}(\by)\Gamma_{d,2q}(\by)
        \sum_{k=0}^{M-1}|w_k|^2
     \right)^{\frac{1}{2}}\hspace{-5pt}.
    \end{align*}
  }
\end{corollary}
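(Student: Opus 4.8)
The plan is to obtain Corollary~\ref{cor:sum} by reducing the discrete sum to a continuous-time integral and then invoking Lemma~\ref{lem:integral}. First I would define the piecewise-constant embeddings $\bu_t = \by_{\floor{t}}$ and $f_t = w_{\floor{t}}$, so that by construction $\bu_t$ is a zero-mean $L$-mixing process in continuous time (this is exactly the embedding used to \emph{define} discrete-time $L$-mixing in Subsection~\ref{subsec:L-mixing_intro}). The key observation is that on each unit interval $[k,k+1)$ the integrand $f_t\bu_t$ is constant and equal to $w_k\by_k$, so $\int_0^M f_t\bu_t\,dt = \sum_{k=0}^{M-1} w_k\by_k$, and likewise $\int_0^M |f_t|^2\,dt = \sum_{k=0}^{M-1}|w_k|^2$. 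Thus the left-hand side and the second square-root factor on the right-hand side transfer over exactly, with $T=M$ and $2m=2q$.

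Next I would apply Lemma~\ref{lem:integral} to the embedded process, which gives
\begin{align*}
\left\|\sum_{k=0}^{M-1}w_k\by_k\right\|_{L_{2q}} \le 2\sqrt{(2q-1)M_{2q}(\bu)\Gamma_{2q}(\bu)}\sqrt{\sum_{k=0}^{M-1}|w_k|^2}.
\end{align*}
It then remains to rewrite the continuous-time mixing constants for $\bu$ in terms of the discrete-time constants for $\by$. Here I would use the two facts recorded just before the corollary: the moment bound is unchanged under the embedding, so $M_{2q}(\bu)=M_{2q}(\by)$, and the continuous-time horizon sum is controlled by the discrete one via $\Gamma_{2q}(\bu)=\Gamma_{2q}(\by)\le 2\Gamma_{d,2q}(\by)$. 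Substituting these gives the extra factor of $\sqrt{2}$ inside the square root, producing exactly the stated constant $2\sqrt{2(2q-1)M_{2q}(\by)\Gamma_{d,2q}(\by)}$.

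There is essentially no hard analytic obstacle here, since all the real work is hidden in Lemma~\ref{lem:integral}; the corollary is a bookkeeping translation. The one point that requires a little care is verifying that the embedded process genuinely satisfies the hypotheses of Lemma~\ref{lem:integral}, in particular that $\bu_t=\by_{\floor{t}}$ is $L$-mixing with respect to the embedded filtrations $\cF_t=\cF_{\floor{t}}$ and $\cF_t^+=\cF_{\floor{t}}^+$ rather than merely measurable and moment-bounded. This is precisely what the discussion in Subsection~\ref{subsec:discrete} establishes (including the nontrivial check that $\cF_t^+=\sigma\{\bigcup_{\epsilon>0}\cF_{t+\epsilon}^+\}$), so I would cite that construction directly. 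The only other subtlety is tracking the floor-function relationship $\gamma_q(\tau,\by)=\max\{\gamma_q(\floor{\tau},\by),\gamma_q(\floor{\tau}+1,\by)\}$ to justify $\Gamma_{2q}(\by)\le 2\Gamma_{d,2q}(\by)$, but this bound is already stated in the excerpt and can be used as a black box.
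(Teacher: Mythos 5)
Your proposal is correct and follows exactly the route the paper takes: it states that Corollary~\ref{cor:sum} "follows from Lemma~\ref{lem:integral} applied to $\by_t=\by_{\floor{t}}$ and $w_t=w_{\floor{t}}$, and then using the bounds on $\Gamma_{d,q}(\by)$," which is precisely your piecewise-constant embedding plus the substitution $\Gamma_{2q}(\by)\le 2\Gamma_{d,2q}(\by)$ yielding the extra $\sqrt{2}$. Your bookkeeping of the constants and your attention to the filtration embedding from Subsection~\ref{subsec:discrete} are both accurate.
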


\subsection{$L$-mixing properties of Spectral Data Matrices}


%

The following result shows that the data matrices, $\hat \by_i(s)\hat \by_i(s)^\star$ inherit $L$-mixing properties from 
the original data sequence, $\by[k]$.

\begin{proposition}
  \label{prop:mixingMatrices}
  {\it
  If $\by[k]$ (using the Euclidean norm) is zero-mean and $L$-mixing with respect to $(\cF,\cF^+)$, then for all $s\in\bbR$, $\hat \by_i(s)\hat \by_i(s)^\star$ (using the Frobenius norm) is $L$-mixing with respect to $(\cG,\cG^+)$ where $\cG_i=\cF_{iK+(M-1)}$ and $\cG_i^+=\cF_{iK+(M-1)}^+$ for all $i\in\bbN$. Furthermore, the bounds satisfy
  \begin{align*}
    M_q(\hat\by(s)\hat\by(s)^\star)
    &\le
      8(2q-1)M_{2q}(\by)\Gamma_{d,2q}(\by)
    \\
    \Gamma_{d,q}(\hat\by(s)\hat\by(s)^\star)&\le 12\sqrt{2(2q-1)M_{2q}(\by)\Gamma_{d,2q}(\by)}\cdot\\
    &
      \hspace{-30pt}\left(4\frac{M}{K}\sqrt{2(2q-1)M_{2q}(\by)\Gamma_{d,2q}(\by)}+\Gamma_{2q}(\by)\right)
  \end{align*}
  for all $q\ge 1$.
  }
\end{proposition}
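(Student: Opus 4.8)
The plan is to verify the three claims in turn, abbreviating $\bu_i=\hat\by_i(s)$ and $\bz_i=\bu_i\bu_i^\star$, and reducing each estimate to Corollary~\ref{cor:sum}. Since $\bu_i=\sum_{k=0}^{M-1}w_k(s)\by[iK+k]$ involves only $\by[iK],\dots,\by[iK+(M-1)]$ and $\by[j]$ is $\cF_j$-measurable with $\cF$ increasing, $\bu_i$ is $\cF_{iK+(M-1)}=\cG_i$-measurable, hence so is $\bz_i$. For the moment bound I would use $\|\bu_i\bu_i^\star\|_F=\|\bu_i\|_2^2$, so that $\|\bz_i\|_{L_q}=\|\bu_i\|_{L_{2q}}^2$; Corollary~\ref{cor:sum} together with $\sum_k|w_k(s)|^2=1$ gives $\|\bu_i\|_{L_{2q}}\le 2\sqrt{2(2q-1)M_{2q}(\by)\Gamma_{d,2q}(\by)}$ uniformly in $i$, and squaring yields the asserted bound on $M_q(\bz)$.

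For the mixing constant I would first remove the product structure. Writing $\hat\bu_i=\bbE[\bu_i\mid\cG_{i-\tau}^+]$ and using the $\cG_{i-\tau}^+$-measurable predictor $\hat\bu_i\hat\bu_i^\star$ for $\bz_i$, the contraction property of conditional expectation gives $\|\bz_i-\bbE[\bz_i\mid\cG_{i-\tau}^+]\|_{L_q}\le 2\|\bz_i-\hat\bu_i\hat\bu_i^\star\|_{L_q}$. Expanding $\bz_i-\hat\bu_i\hat\bu_i^\star=(\bu_i-\hat\bu_i)\bu_i^\star+\hat\bu_i(\bu_i-\hat\bu_i)^\star$, bounding each outer product in Frobenius norm, and using Cauchy--Schwarz in $L_q$ together with $\|\hat\bu_i\|_{L_{2q}}\le\|\bu_i\|_{L_{2q}}$, I obtain a bound of the form $\gamma_q(\tau,\bz)\lesssim \big(\sup_i\|\bu_i\|_{L_{2q}}\big)\,\gamma_{2q}(\tau,\bu)$. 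Thus, after inserting the moment bound, the problem reduces to estimating $\gamma_{2q}(\tau,\bu)$ and summing it over $\tau$.

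To estimate $\gamma_{2q}(\tau,\bu)=\sup_i\|\bu_i-\hat\bu_i\|_{L_{2q}}$ I would split $\bu_i$ at the conditioning time. Let $\ell_k=\tau K+k-(M-1)$ be the lag of $\by[iK+k]$ relative to $(i-\tau)K+(M-1)$, and write $\bu_i=\bu_i'+\bu_i''$, where $\bu_i'$ gathers the terms with $\ell_k\le 0$ and $\bu_i''$ those with $\ell_k>0$. The samples in $\bu_i'$ have index $\le (i-\tau)K+(M-1)$, so they are $\cF_{(i-\tau)K+(M-1)}$-measurable and hence independent of $\cG_{i-\tau}^+$; therefore $\bbE[\bu_i'\mid\cG_{i-\tau}^+]=0$ and $\|\bu_i'\|_{L_{2q}}$ is controlled by Corollary~\ref{cor:sum}. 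The tail $\|\bu_i''-\bbE[\bu_i''\mid\cG_{i-\tau}^+]\|_{L_{2q}}$ I would bound term by term using the mixing coefficients of the data, $\|\by[iK+k]-\bbE[\by[iK+k]\mid\cG_{i-\tau}^+]\|_{L_{2q}}\le\gamma_{2q}(\ell_k,\by)$. Summing over $\tau\ge0$: the block term $\bu_i'$ is nonzero only for the $O(M/K)$ values of $\tau$ with $\tau K\le M-1$, each contributing a multiple of $\sqrt{2(2q-1)M_{2q}(\by)\Gamma_{d,2q}(\by)}$, which produces the factor $\tfrac{M}{K}\sqrt{2(2q-1)M_{2q}(\by)\Gamma_{d,2q}(\by)}$; the tail term, after interchanging the sums over $\tau$ and the window index $k$ so that the lags $\ell_k$ sweep the positive integers, collapses into a multiple of $\Gamma_{2q}(\by)$. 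Combining with the reduction above gives the stated two-term bound.

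The routine portions are the measurability and moment claims and the product/best-predictor step. The delicate point is the summation over $\tau$ of the tail contribution: one must account carefully for the block structure induced by the shift $K$ and the window length $M$---in particular for how the boundary between the independent block $\bu_i'$ and the mixing tail $\bu_i''$ moves as $\tau$ grows---so as to isolate the clean $\tfrac{M}{K}$ factor on the first term and a single $\Gamma_{2q}(\by)$, rather than a multiplicity-inflated sum of $\gamma_{2q}$'s, on the second. Pushing the constants through this double sum is where essentially all of the bookkeeping resides.
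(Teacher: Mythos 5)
Your proposal follows essentially the same route as the paper's proof: first show that $\hat\by_i$ is $L$-mixing with respect to $(\cG,\cG^+)$, using Corollary~\ref{cor:sum} for the moment bound and, for the mixing coefficient, splitting the window into a part independent of $\cG_{i-\tau}^+$ (active only for the $O(M/K)$ small values of $\tau$, yielding the $\frac{M}{K}$ term) plus a tail controlled by $\gamma_{2q}(\cdot,\by)$; then lift to the outer product by decomposing around $\bbE[\hat\by_i\mid\cG_{i-\tau}^+]$ and applying Cauchy--Schwarz to get $\gamma_q(\tau,\hat\by\hat\by^\star)\le \mathrm{const}\cdot M_{2q}(\hat\by)\,\gamma_{2q}(\tau,\hat\by)$. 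The only cosmetic difference is that you invoke the generic twice-the-best-predictor inequality with the predictor $\bbE[\hat\by_i\mid\cG_{i-\tau}^+]\bbE[\hat\by_i\mid\cG_{i-\tau}^+]^\star$ and a two-term expansion, whereas the paper expands $\hat\by_i\hat\by_i^\star-\bbE[\hat\by_i\hat\by_i^\star\mid\cG_{i-\tau}^+]$ exactly into four terms before applying the same Cauchy--Schwarz estimates; both reductions (and the multiplicity bookkeeping in the double sum over $\tau$ and the window index, which you correctly flag as the delicate point) lead to the stated constants.
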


In the rest of this section, we will prove Proposition~\ref{prop:mixingMatrices}. And we will drop the dependence on $s$ for compact notation.

Firstly, we show that the vectors $\hat\by_i$ are also $L$-mixing.
%
 \begin{lemma}
  \label{lem:transformMixing}
  {\it
   If $\by[k]$ is zero-mean and $L$-mixing with respect to $(\cF,\cF^+)$, then $\hat \by_i$ is $L$-mixing with respect to $(\cG,\cG^+)$ where $\cG_i=\cF_{iK+(M-1)}$ and $\cG_i^+=\cF_{iK+(M-1)}^+$ for all $i\in\bbN$. Furthermore, for all $q\ge 1$, the bounds satisfy
  \begin{align*}
    M_q(\hat\by)&\le 2\sqrt{2((q\lor 2)-1)M_{q\lor 2}(\by)\Gamma_{d,q\lor 2}(\by)} \\
    \Gamma_{d,q}(\hat\by)&\le
                           2\frac{M}{K} M_q(\hat \by) + \Gamma_q(\by).
  \end{align*}
 } 
\end{lemma}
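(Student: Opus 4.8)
The plan is to prove Lemma~\ref{lem:transformMixing} by directly verifying the three defining properties of an $L$-mixing process for the transformed sequence $\hat\by_i = \sum_{k=0}^{M-1} w_k \by[iK+k]$, working with respect to the shifted filtration $(\cG, \cG^+)$. The measurability property is immediate: since $\hat\by_i$ is a fixed linear combination of $\by[iK], \ldots, \by[iK+(M-1)]$, and each $\by[iK+k]$ is $\cF_{iK+k}$-measurable with $iK+k \le iK+(M-1)$, the vector $\hat\by_i$ is measurable with respect to $\cG_i = \cF_{iK+(M-1)}$.

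For the moment bound $M_q(\hat\by)$, I would apply Corollary~\ref{cor:sum} directly to the finite sum $\hat\by_i = \sum_{k=0}^{M-1} w_k \by[iK+k]$. Since $w(s)$ is a Euclidean unit vector, $\sum_k |w_k|^2 = 1$, so the $\sqrt{\sum |w_k|^2}$ factor is exactly $1$. Corollary~\ref{cor:sum} is stated for $L_{2q}$ norms, so to get a bound valid for all $q \ge 1$ I would apply it at index $\lceil q/2 \rceil$ or, more cleanly, use the stated $q \lor 2$ bookkeeping: for $q \ge 2$ apply the corollary with its parameter set to $q/2$ (giving the $L_q$ norm), and for $1 \le q < 2$ use monotonicity of $L_q$ norms in $q$ to bound $\|\hat\by_i\|_{L_q} \le \|\hat\by_i\|_{L_2}$, which explains the appearance of $M_{q\lor 2}(\by)$ and $\Gamma_{d,q\lor 2}(\by)$ in the claimed bound $M_q(\hat\by)\le 2\sqrt{2((q\lor 2)-1)M_{q\lor 2}(\by)\Gamma_{d,q\lor 2}(\by)}$.

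The main obstacle is the third property, bounding $\Gamma_{d,q}(\hat\by) = \sum_{\tau} \gamma_q(\tau, \hat\by)$, where $\gamma_q(\tau,\hat\by) = \sup_i \|\hat\by_i - \bbE[\hat\by_i \mid \cG_{i-\tau}^+]\|_{L_q}$. The key point is that $\cG_{i-\tau}^+ = \cF_{(i-\tau)K+(M-1)}^+$, so conditioning with respect to $\cG_{i-\tau}^+$ is conditioning with respect to an $\cF^+$-algebra indexed at time $(i-\tau)K + (M-1)$. For each summand $w_k \by[iK+k]$ in $\hat\by_i$, the relevant time lag in the original process is $(iK+k) - ((i-\tau)K + (M-1)) = \tau K + k - (M-1)$. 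By linearity, $\hat\by_i - \bbE[\hat\by_i \mid \cG_{i-\tau}^+] = \sum_k w_k(\by[iK+k] - \bbE[\by[iK+k]\mid \cF_{(i-\tau)K+(M-1)}^+])$, and I would bound its $L_q$ norm via the triangle inequality by $\sum_k |w_k|\, \gamma_q(\tau K + k - (M-1), \by)$ (interpreting nonpositive lags as giving a factor bounded by $M_q$ rather than $\gamma_q$, since the conditioning may be on a future that does not yet decouple). Summing over $\tau$ and interchanging the order of summation, the small-lag terms (where $\tau K + k - (M-1) \le 0$) contribute the $2\frac{M}{K}M_q(\hat\by)$ term — the factor $M/K$ arising because each fixed original-time index falls into roughly $M/K$ of the blocks — while the positive-lag terms reassemble into $\sum_{\tau} \gamma_q(\cdot, \by) \le \Gamma_q(\by)$. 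The delicate bookkeeping is the change of summation variable and the careful accounting of how many block-lag pairs $(\tau, k)$ map to each original lag, together with using $\sum_k |w_k| \le \sqrt{M}\,\|w\|_2 = \sqrt{M}$ by Cauchy--Schwarz; getting the constants and the $M/K$ factor exactly right is where the work lies.
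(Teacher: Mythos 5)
Your measurability argument and your derivation of the moment bound $M_q(\hat\by)$ via Corollary~\ref{cor:sum} (with the $q\lor 2$ bookkeeping handled by monotonicity of $L_q$ norms) coincide with the paper's proof. The gap is in the bound on $\Gamma_{d,q}(\hat\by)$, specifically in how you treat the small lags. You apply the triangle inequality term-by-term for \emph{every} $\tau$, bounding $\|\hat\by_i-\bbE[\hat\by_i\mid\cG_{i-\tau}^+]\|_{L_q}$ by $\sum_{k}|w_k|\,\gamma_q(\tau K+k-(M-1),\by)$ and replacing $\gamma_q$ by (a multiple of) $M_q(\by)$ whenever the lag is nonpositive. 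Now count the pairs $(\tau,k)$ with $\tau K+k\le M-1$: for each $\tau\le (M-1)/K$ there are about $M-\tau K$ admissible $k$, so the total is of order $M^2/K$. Each such pair contributes $|w_k|\,M_q(\by)$, so even using $\sum_k|w_k|\le\sqrt M$ your small-lag contribution is of order $(M^{3/2}/K)\,M_q(\by)$, and of order $(M^2/K)\,M_q(\by)$ with only $|w_k|\le 1$. This cannot reduce to the claimed $2\frac{M}{K}M_q(\hat\by)$, because $M_q(\hat\by)$ is bounded \emph{independently of $M$} --- that is exactly the content of the first half of the lemma. The term-by-term triangle inequality throws away the $\ell^2$-type cancellation across the window that Corollary~\ref{cor:sum} captures, and no bookkeeping of the change of summation variable recovers it; the issue is the functional form in $M$, not the constants.

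The paper's proof avoids this by splitting on $\ell$ rather than on individual per-term lags: for the at most $M/K$ values of $\ell$ with $\ell K<M-1$, it bounds the \emph{whole} difference at once, $\|\hat\by_i-\bbE[\hat\by_i\mid\cG_{i-\ell}^+]\|_{L_q}\le 2M_q(\hat\by)$, by the triangle inequality and Jensen's inequality applied to the conditional expectation, thereby reusing the already-established $M$-independent moment bound; only for $\ell K\ge M-1$, where every per-term lag $\ell K+k-(M-1)$ is nonnegative, does it use your term-by-term estimate $\sum_{k=0}^{M-1}\gamma_q(\ell K+k-(M-1),\by)$, whose sum over $\ell$ yields the $\Gamma_q(\by)$ term. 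You need this two-regime split to obtain the stated bound.
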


\begin{proof}
  For $q\ge 1$, we use Corollary~\ref{cor:sum} and the fact that $w$ is a unit vector to give:
  \begin{align*}
    \left\|\hat \by_i\right\|_{L_{2q}}&=\left\| \sum_{k=0}^{M-1}w_k\by[iK+k]\right\|_{L_{2q}}\\
    &\le 2\sqrt{2(2q-1)M_{2q}(\by)\Gamma_{d,2q}(\by)} 
  \end{align*}

  When $q\ge 2$, the bound on $M_q(\hat \by)$ follows by replacing $2q$ with $q$. For $q\in [1,2)$, the result follows from monotonicity of the $L_q$ norms. 

  Now we bound $\Gamma_{d,q}(\hat\by)$.
  By construction, $\hat\by_i$ is $\cG_i$ measurable for all $i$ and $\cG_i$ and $\cG_i^+$ are independent. For all $0\le \ell \le i$, we have
  $
  \|\hat\by_i-\bbE[\hat\by_i|\cG_{i-\ell}^+]\|_{L_q} \le 2 M_q(\hat \by).
  $
  When $\ell K \ge (M-1)$, we have that $(i-\ell)K+(M-1)=\left(iK+k\right)-\left(\ell K+k-(M-1)\right)$, where $\ell K+k-(M-1)\ge 0$ for all $k=0,\ldots,M-1$. In this case, using the triangle inequality, and that $|w_k|\le 1$ gives:
  \begin{align*}
    \MoveEqLeft[0]
    \|\hat\by_i-\bbE[\hat\by_i|\cG_{i-\ell}^+]\|_{L_q}\\
    &\le 
      \sum_{k=0}^{M-1}\|
      \by[iK+k]-\bbE[\by[iK+k]|\cF_{(i-\ell)K+(M-1)}^+]\|_{L_q} \\
    &\le \sum_{k=0}^{M-1}\gamma_{q}(\ell K +k-(M-1),\by).
  \end{align*}
  Now, since at most $M/K$ values of $\ell$ have $\ell < (M-1)/K$, the bound on $\Gamma_q(\hat \by)$ follows by summing over $\ell$.
\end{proof}

Now we can use the $L$-mixing properties of $\hat \by_i$ to derive the $L$-mixing properties of the outer product, $\hat\by_i\hat\by_i^\star$.

\paragraph*{Proof of Proposition~\ref{prop:mixingMatrices}}

Note that for any vectors, $\|xy^\star\|_F=\|x\|_2 \|y\|_2$. In particular,
$
\|\hat \by_i \hat \by_i^\star\|_{L_q}=\| \|\hat\by_i\|_2^2\|_{L_q}=\|\hat\by_i\|_{L_{2q}}^2. 
$
The bound on $M_{q}(\hat \by \hat \by^\star)$ now follows.

To bound the mixing constant, first note that for $0\le \ell\le i$:
\begin{align*}
  \MoveEqLeft
  \hat\by_i\hat\by_i^\star - \bbE\left[\hat\by_i\hat\by_i^\star\middle|\cG_{i-\ell}^+ \right]
  \\
  &=(\hat\by_i-\bbE[\hat\by_i|\cG_{i-\ell}^+])(
    \hat\by_i-\bbE[\hat\by_i|\cG_{i-\ell}^+])^\star\\
  &+(\hat\by_i-\bbE[\hat\by_i|\cG_{i-\ell}^+])\bbE[\hat\by_i|\cG_{i-\ell}^+]^\star\\
  &+
    \bbE[\by_i|\cG_{i-\ell}^+]
    (\hat\by_i-\bbE[\hat\by_i|\cG_{i-\ell}^+])^\star\\
  &-\bbE\left[(\hat\by_i-\bbE[\hat\by_i|\cG_{i-\ell}^+])(
    \hat\by_i-\bbE[\hat\by_i|\cG_{i-\ell}^+])^\star\middle|\cG_{i-\ell}^+\right].
\end{align*}
This equality is derived by plugging in $\hat\by_i=(\hat\by_i-\bbE[\hat\by_i|\cG_{i-\ell}^+])+\bbE[\hat\by_i|\cG_{i-\ell}^+]$ and then simplifying.

Now, taking the $L_q$ norm gives
\begin{multline}
  \label{eq:matrixTriangle}
  \left\|
  \hat\by_i\hat\by_i^\star - \bbE\left[\hat\by_i\hat\by_i^\star\middle|\cG_{i-\ell}^+ \right]\right\|_{L_q}
  \\
  \le 2\left\|(\hat\by_i-\bbE[\hat\by_i|\cG_{i-\ell}^+])\bbE[\hat\by_i|\cG_{i-\ell}^+]^\star\right\|_{L_q}\\
  +  2 \left\|(\hat\by_i-\bbE[\hat\by_i|\cG_{i-\ell}^+])( \hat\by_i-\bbE[\hat\by_i|\cG_{i-\ell}^+])^\star\right\|_{L_q}                                                                            .    
\end{multline}

The first term on the right can be bounded by:
\begin{align*}
  &\left\|\left\|\hat\by_i-\bbE[\hat\by_i|\cG_{i-\ell}^+]\right\| \left\|\bbE[\hat\by_i|\cG_{i-\ell}^+] \right\| \right\|_{L_q}\\
  &\le  \left\|\hat\by_i-\bbE[\hat\by_i|\cG_{i-\ell}^+]\right\|_{L_{2q}}
    \left\|\bbE[\hat\by_i|\cG_{i-\ell}^+] \right\|_{L_{2q}} \\
  &\le \gamma_{2q}(\ell,\hat\by)M_{2q}(\hat\by).
\end{align*}

The second term on the right is bounded similarly by:
\begin{align*}
   & \left\|\left\|\hat\by_i-\bbE[\hat\by_i|\cG_{i-\ell}^+]\right\| \left\|\hat\by_i-\bbE[\hat\by_i|\cG_{i-\ell}^+]\right\| \right\|_{L_q} \\
&\le 2\|\hat\by_i\|_{L_{2q}}  \|\hat\by_i-\bbE[\hat\by_i|\cG_{i-\ell}^+]\|_{L_{2q}} \\
  &\le 2M_{2q}(\hat\by)\gamma_{2q}(\ell,\hat\by). 
\end{align*}

Plugging the bounds into (\ref{eq:matrixTriangle}) and summing over $\ell$ gives:
$$
\Gamma_{d,q}(\hat\by\hat\by^\star)\le 6M_{2q}(\hat \by)\Gamma_{d,2q}(\hat \by).
$$
The result now follows by plugging in the expressions for $M_{2q}(\hat \by)$ and $\Gamma_{d,2q}(\hat\by)$ from Lemma~\ref{lem:transformMixing}. 
\hfill\QED

\section{Proofs of Main Results} \label{sec:proofs}
After obtaining the necessary L-mixing properties, we are ready to step through proofs of the main results.
\subsection{Proof of Theorem ~\ref{theorem_main}}
Let $\bx_{k} = \hat{\bPhi}_k(s)$, $\bz_k = \hat{\by}_{k}(s) \hat{\by}_{k}(s)^*$, $\bar{z} = \bbE[\bz_k]$, $\be_k =  \bx_k -\bar{z}$ and $\tilde{\bz}_k = \bz_k -\bar{z}$.
Then, $\be_{k+1} = (1-\alpha_k) \be_k  + \alpha_k \tilde{\bz}_k$.

Setting $\beta_{i,k} = \Pi_{j = i+1}^{k-1}(1-\alpha_j) \alpha_i \le \alpha_i$, $\tau_i = \sum_{j=0}^{i-1} \alpha_j$ and for all $q \ge 1 $, iterating and taking the $L_{2q}$ norm gives
\begin{align*}
\nonumber
\|\be_k \|_{L_{2q}} &\le \Pi_{i = k_0}^{k-1} (1-\alpha_i) \|\be_{k_0}\|_{L_{2q}}  + \| \sum_{i = k_0}^{k-1} \beta_{i,k} \tilde{\bz}_i  \|_{L_{2q}} \\ \nonumber
& \le e^{-(\tau_k - \tau_{k_0})} \|\be_{k_0}\|_{L_{2q}} \\ \nonumber
& \quad + \sqrt{ \sum_{i = k_0}^{k-1} \beta_{i,k}^2} 2 \sqrt{2(2q-1) M_{2q}(\tilde{\bz}) \Gamma_{d,2q}(\tilde{\bz})} \\ 
& \le e^{-(\tau_k - \tau_{k_0})} \|\be_{k_0}\|_{L_{2q}} + \sqrt{ \sum_{i = k_0}^{k-1} \alpha_i^2} C_q, 
\end{align*}
where $C_q = 2 \sqrt{2(2q-1) 2 M_{2q}(\bz) \Gamma_{d,2q}({\bz})} $.
%
%
The first inequality above uses $1- x \le e^{-x}$, $\forall x \in \bbR$ and Corollary~\ref{cor:sum}.

Choose integers $1=s_0<\cdots < s_m < s_{m+1}= k$ and iterate the bound above:
%
\begin{align*}
\|\be_k \|_{L_{2q}} &\le C_q \sum_{l=0}^{m} \sqrt{\sum_{i =s_{l}}^{s_{l+1}-1} \alpha_i^2} e^{-(\tau_k - \tau_{s_{l+1}})} \\
& \hspace{60pt} + e^{-(\tau_k-\tau_{s_0}) } \|\be
_{s_0}\|_{L_{2q}} \\
 &\le 2 C_q \sum_{l=0}^{m} \frac{1}{\sqrt{s_l}} \frac{s_{l+1}}{k} +    \frac{2}{k} \|\be_1\|_{L_{2q}}.
\end{align*}
The last inequality uses the Riemann sum bounds: For $s_l \ge 1$, $l \in [0, m]$, $\sum_{j=s_l}^{s_{l+1}-1} \alpha_j^2 \le \frac{1}{s_l}$ and $e^{-\sum_{i=s_l}^{k-1}\alpha_i} \le \frac{s_l + 1}{k+1}\le 2 \frac{s_l}{k}$.

Let $s_l = k^{\frac{2^l -1}{2^l}}$ for $l \in [0,m-1]$, we have $\frac{1}{\sqrt{s_l}} \frac{s_{l+1}}{k} = \frac{1}{\sqrt{k}}$. Then, setting $\frac{1}{\sqrt{s_{m}}} \le \frac{\sqrt{2}}{\sqrt{k}}$ gives $ k^{\frac{2^m -1}{2^m}} \ge \frac{k}{2}
\Leftrightarrow m \ge \log_2(\log_2 k)$ when $k>1$.
%


$\|\be_1\|_{L_{2q}} = \|\bz_0 - \bar{z}\|_{L_{2q}}$ gives $\|\be_1\|_{L_{2q}} \le 2 M_{2q}(\bz)$. Furthermore, when $k \ge 4$, we have $\log_2(\log_2 k) \ge 1$. Then,
we choose $m = \ceil{\log_2(\log_2 k)}$  to obtain 
\begin{align*}
\|\be_k \|_{L_{2q}} &\le (6 \sqrt{2} C_q + 4  M_{2q}(\bz)) \frac{1}{\sqrt{k}} \log_2(\log_2 k).
\end{align*}
%

Plugging the bounds from Proposition~\ref{prop:mixingMatrices} and the monotonicity of $L_q$ norm 
completes the proof.
\hfill\QED

\subsection{Proof of Proposition ~\ref{prop:bias}}

For $0\le i \le j$,
\begin{align} \label{eq:autocov}
\|R[j-i] \|_2
&= \left\|\bbE\left[(\by[j] - \bbE[\by[j] \vert \cF_i^+])\by[i]^\top \right]  \right. \nonumber \\
\nonumber
& \left. \quad \quad + \bbE[\bbE[\by[j] \vert \cF_i^+]\by[i]^\top] \right\|_{2} \nonumber \\
&\le \left \|\bbE\left[(\by[j] - \bbE[\by[j] \vert \cF_i^+])\by[i]^\top \right] \right \|_{2}  \nonumber \\
& \quad \quad + \left\| \bbE\left[\bbE[\by[j] \vert \cF_i^+]\by[i]^\top \right] \right\|_{2}. 
\end{align}

Then, use Jensen's inequality, norm inequalities and the Cauchy-Schwarz inequality to bound the first term of \eqref{eq:autocov}:
\begin{align*}
& \left\|\bbE\left[(\by[j] - \bbE[\by[j] \vert \cF_i^+])\by[i]^\top \right] \right\|_{2} \\
 &\le \bbE[\| (\by[j] - \bbE[\by[j] \vert \cF_i^+])\by[i]^\top\|_2] \\
 & \le  \bbE[ \|\by[j] - \bbE[\by[j] \vert \cF_i^+] \| \|\by[i]\|] \\
 & \le \sqrt{\bbE[ \|\by[j] - \bbE[\by[j] \vert \cF_i^+]\|^2 } \sqrt{\bbE[\|\by[i]\|^2]} \\
 &\le M_2(\by) \gamma_2(j-i, \by).
\end{align*}

For the second term of \eqref{eq:autocov}, we use the independence of $\bbE[\by[j] \vert \cF_i^+] $  and $\by[i]$, combining with the zero-mean assumption to obtain $
\bbE \left[\bbE[\by[j] \vert \cF_i^+]\by[i]^\top \right] = 0  $.

The bounds when $0\ge i \ge j$ are derived similarly. Overall, we have 
$
\|R[k]\|_2 \le  M_2(\by)\gamma_2(|k|,\by)
$
for all integers, $k$.

The rest now follows from bias calculations from \cite{lamperski2023nonasymptotic}.
\hfill\QED

\subsection{Proof of Theorem~\ref{thm:highProb}}

For all $\epsilon >0$ and all $q\ge 1$, Markov's inequality, followed by Theorem~\ref{theorem_main} implies:
\begin{align}
  \bbP(\|\bx_k-\bar{z}\|>\epsilon)
  & \le \epsilon^{-q}\|\bx_k-\bar{z}\|_{L_q}^q
  \label{eq:markovBound} \le \epsilon^{-q} f_k^q q^{rq}
\end{align}
where
$
f_k=c\frac{\log_2(\log_2(k))}{\sqrt{k}}.
$
The logarithm of the upper bound in (\ref{eq:markovBound}) is:
$
  g(q)=q\ln(\epsilon^{-1}f_k)+rq\ln q,
$
which is convex in $q$. The global minimum is given by $q^\star=e^{-1}\left(\frac{\epsilon}{f_k}\right)^{\frac{1}{r}}$
with $q^\star \ge 1$ when $\epsilon \ge f_k e^r$.
Thus, as long as $\epsilon\ge f_k e^{r}$, we have
$$
\bbP(\|\bx_k-\bar{z}\| >\epsilon)\le (\epsilon^{-1}f_k (q^\star)^r)^{q^\star}=e^{-\frac{r}{e}\left(\frac{\epsilon}{f_k}\right)^{\frac{1}{r}}}.
$$

Fix $\nu \in (0,1)$. Then $e^{-\frac{r}{e}\left(\frac{\epsilon}{f_k}\right)^{\frac{1}{r}}}\le \nu$ iff
$
\epsilon \ge f_k e^r \frac{\left(\ln \nu^{-1}\right)^r}{r^r}.
$
Thus, if $\epsilon= f_k e^r\max\left\{1, \frac{\left(\ln \nu^{-1}\right)^r}{r^r}\right\}$, $\bbP(\|\bx_k-\bar{z}\|>\epsilon)\le\nu$. 
\hfill\QED

\subsection{Proof of Proposition~\ref{prop:comparison}}
\label{ss:pf:comparison}
Let $\bzeta_i[k]=\bv$ be a $\sigma$-sub-Gaussian random variable. 
We follow the basic steps of Proposition 2.5.2 in \cite{vershynin2018high}, but track the constants more specifically.
\begin{align*}
  \bbE[|\bv|^q]
  &=\int_0^{\infty}(\bbP(\bv>\epsilon^{1/q})+\bbP(-\bv>\epsilon^{1/q}))d\epsilon.
\end{align*}
For all $\lambda >0$, we have:
\begin{multline*}
  \bbP(\bv >\epsilon^{1/q})
  =\bbP(e^{\lambda \bv} >e^{\lambda\epsilon^{1/q}}) 
  \le e^{-\lambda \epsilon^{1/q}}\bbE[e^{\lambda \bv}]
\\
  \le
e^{-\lambda \epsilon^{1/q}+\frac{1}{2}\lambda^2\sigma^2}
  \:\overset{\lambda=\epsilon^{1/q}/\sigma^2}{\implies} \:  \bbP(\bv >\epsilon^{1/q})  \le e^{-\frac{\epsilon^{2/q}}{2\sigma^2}}.
\end{multline*}
The same upper bound holds for $\bbP(-\bv>\epsilon^{1/q})$. Therefore,
\begin{align*}
  \bbE[|\bv|^q]&\le 2\int_0^{\infty} \exp\left(-\frac{\epsilon^{2/q}}{2\sigma^2}\right)d\epsilon
              =q(\sqrt{2}\sigma)^q \Gamma(q/2)\\
               &\le 3 q (\sqrt{2}\sigma)^q (q/2)^{q/2} 
               = e q\sigma^q q^{q/2}
\end{align*}
where the second inequality uses the Stirling bound $\Gamma(x)\le e x^x$ for $x\ge 1/2$.
It follows that
$
  \|\bv\|_{L_q} \le e^{\frac{1}{q}} q^{\frac{1}{q}} \sigma \sqrt{q}
  \le e^2\sigma \sqrt{q}\le 8\sigma \sqrt{q},
$
where the second inequality uses that $q^{\frac{1}{q}}\le e$ for $q\ge 1$.
Now, using the triangle inequality gives $M_q(\bzeta)\le 8m\sigma \sqrt{q}$. 

Using the bound on $M_q(\bzeta)$, we can bound $M_q(\by)$. Indeed, $\|h[k-\ell]\bzeta[\ell]\|_2\le \|h[k-\ell]\|_2 \|\bzeta[\ell]\|_2$ implies that
$
\|\by[k]\|_{L_q}\le M_q(\bzeta)\sum_{\ell=0}^{\infty}\|h[\ell]\|_2.
$
The bound on $M_q(\by)$ now follows.

To bound $\Gamma_{d,q}(\by)$, let $\cF_k=\sigma\{\bzeta[\ell]|\ell\le k\}$ and $\cF_k^+=\sigma\{\bzeta[\ell]|\ell > k\}$. Then for $0\le \ell\le k$,
\begin{align*}
  \left\|\by[k]-\bbE[\by[k]|\cF_{k-\ell}^+]\right\|_{L_q}&=\left\|\sum_{p = -\infty}^{k-\ell}h[k-p]\bzeta[p]\right\|_{L_q} \\
  &\le M_{q}(\bzeta)\sum_{p=\ell}^{\infty}\|h[p]\|_2.
\end{align*}
Summing the bound above over $l$ completes the proof.
\hfill\QED

\section{Simulation} \label{sec:sim}

To verify the obtained error bounds for the two classical spectrum estimators, samples are generated from measurements of a finite-state Markov chain.  
The stochastic process $\by[k] \in \{0,1\}$ corresponds to the sequence of states generated 
 via the Markov chain with transition matrix 
$
P = \begin{bmatrix}
0.3 & 0.7 \\
0.5 & 0.5
\end{bmatrix}.
$ In the simulation, we shift measurements by the mean $\bbE[\by[k]]=7/12$ to match the zero-mean assumption.

It can be proved that such an ergodic finite-state markov chain $\by[k]$ is L-mixing \cite{ gerencser2002new}. Proposition 4.1 of \cite{gerencser2002new} also calculates an upper bound of the $L$-mixing statistics $\Gamma_{d,q}(\by)$. Note that the Doeblin coefficient in our example is $\delta = \min\{0.3/(5/12), 0.5/(7/12)\} = 0.72$. Let $G_{\max} = \max_k\|\by[k]\|$. 
Therefore, $\Gamma_{d,4q}(\by) \le   4 G_{\max} \frac{1}{1- (1-\delta)^{\frac{1}{4q}}} \le \frac{4 G_{\max}}{\delta} 4q$.
 Furthermore, $M_{4q}(\by) \le G_{\max}$. This allows the explicit computation of the bound shown in Theorem~\ref{thm:highProb}.

We use Bartlett and Welch estimators to esimate power spectral density $\Phi(s)$ of the stochastic process at $s= \frac{1}{2}$. 
The convergence results are shown in Fig. \ref{fig:errors}, which present the errors of the algorithms ($\|\hat \bPhi_k(s) -\bbE[\hat \bPhi_k(s)]\|$) and the theoretical bound in Theorem \ref{thm:highProb}. We set $\nu = 0.1$, meaning the theoretical bound holds with probability $0.9$. We can see the empirical errors are all well below the theoretical bound. We are aware that these bounds are quite conservative. Getting a tighter bound is likely possible by improving some of the bounding techniques but not in the scope of this study.

  \begin{figure}
    \begin{minipage}[b]{\columnwidth}
      \centering
      \includegraphics[width=.7\textwidth]{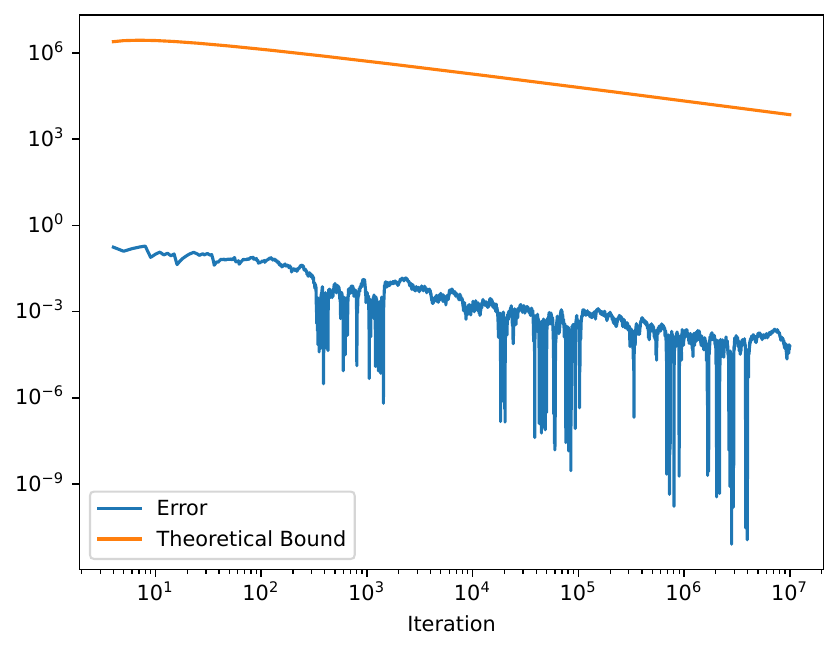}
      \subcaption{\textbf{Bartlett Estimator}. $M =5, L = 10^7$}
    \end{minipage}
    \begin{minipage}[b]{\columnwidth}
      \centering
      \includegraphics[width=.7\textwidth]{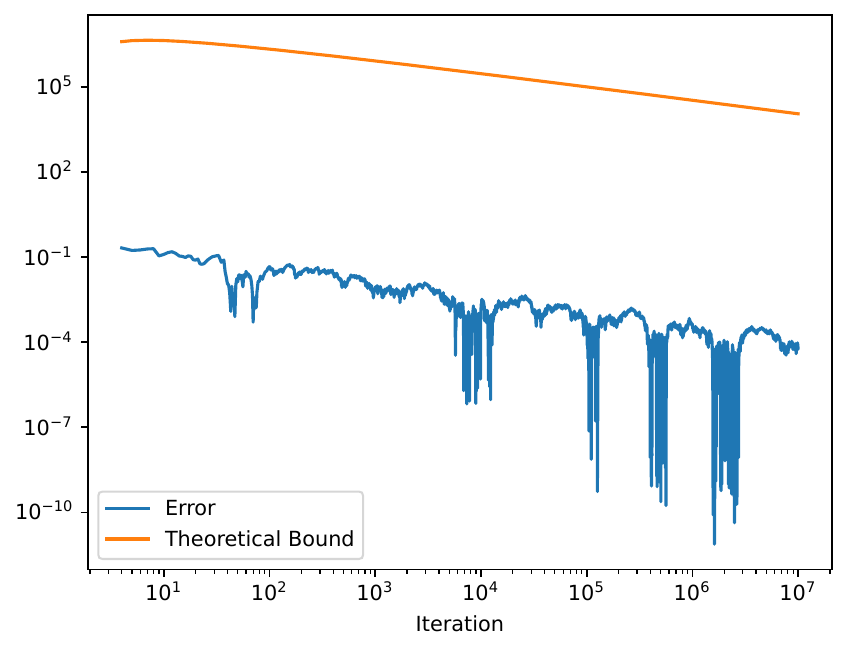}
       \subcaption{\textbf{Welch Estimator}. Hann Window, $M =16, K=8, L = 10^7$}
    \end{minipage}
	\caption{Concentration of estimate to its mean on finite Markov chain data}
	\label{fig:errors}
  \end{figure}

\section{Conclusion and Future Work}
\label{sec:conclusion}

In this work, we showed that the finite-time convergence rate for two classical spectrum estimators is of order $O(\frac{1}{\sqrt{k}} \log_2(\log_2 k))$, where $k$ is the number of chunks of data used in the algorithm. The error bounds corresponding to the variance of the estimators can be quantified by the $L$-mixing properties of the data. For Bartlett estimator, the concentration bound is independent of the window length $M$, while for Welch estimator, it depends on the ratio between $K$ and $M$, which is usually fixed as $0.5$ in practice. 

One limitation is that the zero-mean assumption on the time series often will not hold. 
 Therefore, in the future, we will extend the current analysis to more general time series which are not necessarily zero-mean. Another future direction is to improve the constant factors. 








\printbibliography

\end{document}